\newcommand{\R}{{\mathbb{R}}}
\newcommand{\E}{{\mathbb{E}}}
\newcommand{\N}{{\mathbb{N}}}
\newcommand{\F}{{\mathcal{F}}}  
\renewcommand{\P}{{\mathbb{P}}} 
\newcommand{\B}{{\mathcal{B}}}
\newcommand{\LB}{{\mathcal{L}}}
\newcommand{\Tr}{{\textrm{Tr}}}
\newcommand{\diff}[1]{\,\mathrm{d}#1}
\newcommand{\ee}{\mathrm{e}}
\newcommand{\id}{\mathrm{id}}
\newcommand{\p}{q} 				
\theoremstyle{plain}
\newtheorem{definition}{Definition}[section]
\newtheorem{theorem}[definition]{Theorem}
\newtheorem{lemma}[definition]{Lemma}
\newtheorem{prop}[definition]{Proposition}
\newtheorem{assumption}[definition]{Assumption}
\theoremstyle{definition}
\newtheorem{remark}[definition]{Remark}
\newtheorem{example}[definition]{Example}
\begin{document}

\title[BDF2-Maruyama for SPDEs]
{The BDF2-Maruyama Scheme for Stochastic Evolution Equations with Monotone Drift}

\author[R.~Kruse]{Raphael Kruse}
\address{Raphael Kruse\\
Martin-Luther-Universit\"at Halle-Wittenberg\\
Institut f\"ur Mathematik\\
06099 Halle (Saale)\\
Germany}
\email{raphael.kruse@mathematik.uni-halle.de}

\author[R.~Weiske]{Rico Weiske}
\address{Rico Weiske\\
Martin-Luther-Universit\"at Halle-Wittenberg\\
Institut f\"ur Mathematik\\
06099 Halle (Saale)\\
Germany}
\email{rico.weiske@mathematik.uni-halle.de}

\keywords{stochastic evolution equation, BDF2-Maruyama scheme, backward
differentiation formula, mean-square error, convergence rate} 
\subjclass[2010]{65C30, 60H15, 65M22, 60H35} 

\begin{abstract}
  We study the numerical approximation of stochastic evolution equations with a
  monotone drift driven by an infinite-dimensional Wiener process. To
  discretize the equation, we combine a drift-implicit two-step BDF method for
  the temporal discretization with an abstract Galerkin method for the spatial
  discretization. After proving well-posedness of the BDF2-Maruyama scheme, we
  establish a convergence rate of the strong error for equations under suitable
  Lipschitz conditions. We illustrate our theoretical results through various
  numerical experiments and compare the performance of the BDF2-Maruyama scheme
  to the backward Euler--Maruyama scheme.
\end{abstract}

\maketitle

\section{Introduction}
\label{sec:intro}

\noindent
In this paper, we investigate a spatio-temporal discretization of
a class of nonlinear stochastic evolution equations with monotone drift.
To be more precise, let $(\Omega,\F,(\F_t)_{t \in [0,T]},\P)$ be a filtered
probability space satisfying the usual conditions for fixed $T \in (0,\infty)$.
By $W$ we denote an infinite-dimensional Wiener process with covariance
operator $Q$ which is $(\F_t)_{t \in [0,T]}$-adapted and takes values in a
separable Hilbert space $U$. The stochastic evolution equation under
consideration then reads 
\begin{equation}
  \label{sde_problem}
  \diff X(t) +  A(X(t)) \diff t = B(X(t)) \diff W(t)
  \quad \text{on } [0,T], \quad X(0) = X_0,
\end{equation}
where the operators $A \colon V \times \Omega \rightarrow V^*$ and
$B \colon V \times \Omega \rightarrow \LB_2(Q^\frac12(U),H)$ are defined on a
Gelfand triple $V \hookrightarrow H \cong H^* \hookrightarrow V^*$ for a real,
reflexive, separable Banach space $V$ and a real, separable Hilbert space $H$.
The initial value satisfies $X_0 \in L^2(\Omega,\F_0,\P;H)$, while
the stochastic integral in \eqref{sde_problem} is understood in the sense of
the stochastic It\=o-calculus. This setting allows us to treat several linear,
semi-linear, and quasi-linear stochastic partial differential equations in a
unified analytical framework, see \cite[Chapter~1]{prevot2007} for more
explicit examples.

Throughout this paper we employ the \emph{variational approach} from
\cite{krylov1981, liu2015, prevot2007, rozovskii1990} to analyze the solution
to the stochastic evolution equation \eqref{sde_problem}
and its numerical approximation. We essentially impose the 
same assumptions on the operators $A$ and $B$ as in
\cite{krylov1981,prevot2007}, which are sufficient to ensure the existence of a
unique strong solution to \eqref{sde_problem}. In particular, we assume that
the operators $A$ and $B$ satisfy a monotonicity condition 
and a coercivity condition (see \eqref{con:mono AB} and \eqref{con:coer AB}).
We refer to Section~\ref{sec:discret} for a full account of all imposed
conditions and the precise definition of the exact solution to
\eqref{sde_problem}.

The numerical approximation of \eqref{sde_problem} with time-dependent
operators $A$ and $B$ was studied under similar assumptions in
\cite{gyoengy2005,gyoengy2009}. It was proven in \cite{gyoengy2005} that the
spatio-temporal approximations arising from the forward and backward
Euler--Maruyama method combined with an abstract Galerkin method converge
weakly to the exact solution. Furthermore, convergence rates for the strong
error of these methods were derived in \cite{gyoengy2009} under additional
regularity assumptions on the exact solution and the operator $A$.
Notice that the spatial semi-discretization of \eqref{sde_problem} can lead
to a high-dimensional stiff system of stochastic ordinary differential
equations. In practical simulations it is therefore beneficial
to use an \emph{A-stable} numerical method
for the temporal discretization, such as the backward Euler--Maruyama method.
These methods typically avoid severe step size restrictions as, for instance,
\cite[condition (2.16)]{gyoengy2005} for the forward Euler--Maruyama method.
We refer to \cite{buckwarsickenberger2012, higham2000} for a general discussion
of A-stable numerical schemes for stiff stochastic differential equations.

In this paper we focus on the BDF2 method with an equidistant step size for the
temporal discretization.
The BDF2 method and the backward Euler method both belong to the family of
backward differentiation formulas (BDF) which have proved effective for the
approximation of stiff ordinary differential equations, see, e.g.,
\cite{hairer1996, strehmel2012}. In particular, if applied to ODEs the BDF2
method has the same computational cost and enjoys the same stability properties
as the backward Euler method, while having the advantage of a higher order of
convergence. 

The discretization of stochastic ordinary differential equations (SODE)
by means of the BDF2 method has already been studied in the literature.
The mean-square convergence of drift-implicit linear two-step Maruyama methods
on equidistant time grids was investigated in \cite{buckwar2006} under
a global Lipschitz condition on the coefficients.
Moreover, higher convergence rates of such methods were
derived for problems driven by small noise. The mean-square stability and
convergence for general drift-implicit linear multi-step methods on
non-equidistant time grids were further examined in \cite{sickenberger2008}.
In addition, the mean-square convergence of the BDF2-Maruyama scheme
was proven under a monotonicity condition on the coefficients in
\cite{andersson2017}. However, to the best of our knowledge, multi-step methods
for the temporal discretization of nonlinear stochastic evolution equations
have not been investigated in detail yet.

To formulate the numerical approximation of \eqref{sde_problem}, we consider an
equidistant temporal grid with step size $k = \frac{T}{N_k}$, $N_k \in \N$,
and grid points $t_n = nk$ for $n \in \{0,\ldots,N_k\}$. In addition, let
$V_h \subset V$ be a finite dimensional subspace depending on some parameter
$h \in (0,1)$. For given initial values $(X_{k,h}^n)_{n=0,1}$ the
\emph{BDF2-Maruyama scheme} is defined by 
\begin{equation}
\label{BDF2 scheme}
\begin{aligned}
&\Big( \frac{3}{2}X^n_{k,h}-2X^{n-1}_{k,h}+\frac{1}{2}X^{n-2}_{k,h},
v \Big)_H + k \langle A(X^n_{k,h}), v \rangle_{V^* \times V}\\
&\qquad = \Big( \frac{3}{2}B(X^{n-1}_{k,h})\Delta_k W^n
-\frac{1}{2}B(X^{n-2}_{k,h})\Delta_k W^{n-1},v \Big)_H
\quad \P\text{-a.s.}
\end{aligned}
\end{equation}
for all $v \in V_h$ and $n=2,\ldots,N_k$, where we define the Wiener increments
by $\Delta_k W^n \coloneqq W(t_n)-W(t_{n-1})$ for $n \in \{1,\ldots,N_k\}$.
In order to generate suitable initial values for the scheme \eqref{BDF2 scheme},
the \emph{backward Euler--Maruyama method} will come in handy and is defined
for given initial value $X_{k,h}^0$ by
\begin{equation}
\label{BEM scheme}
\big( X^n_{k,h}-X^{n-1}_{k,h}, v \big)_H
+ k \langle A(X^n_{k,h}), v \rangle_{V^* \times V}
= \big(B(X^{n-1}_{k,h})\Delta_k W^n,v \big)_H
\quad \P\text{-a.s.}
\end{equation}
for all $v \in V_h$ and $n=1,\ldots,N_k$.

As our first main result we show that the discrete process
$(X_{k,h}^n)_{n=0}^{N_k}$ is indeed well-defined by \eqref{BDF2 scheme} under
essentially the assumptions used in \cite{krylov1981, prevot2007}. Further,
under additional conditions on $A$ and $B$ and the temporal regularity of the
exact solution to \eqref{sde_problem}, cf. Assumption~\ref{assump AB2}
to Assumption~\ref{assump X}, we also show that $(X_{k,h}^n)_{n=0}^{N_k}$
is convergent to the exact solution $X$ in the following sense: There exist
$C \geq 0$, $\p \in [2,\infty)$ and $\gamma \in (0,\infty)$, where $\p$
is determined by the regularity of $X$ and $\gamma$ arises from an
approximation error related to the initial values, such that for every
sufficiently small temporal step size $k$ and every $h \in (0,1)$ it holds
\begin{equation}
  \label{ineq1:err est}
  \begin{aligned}
    &\max_{n\in\{2,\ldots,N_k\}} \|X^n_{k,h}-X(t_n)\|_{L^2(\Omega;H)}
    + \Big(k \sum_{n=2}^{N_k}
    \|X^n_{k,h}-X(t_n)\|_{L^2(\Omega;V)}^2 \Big)^{\frac{1}{2}}\\
    &\quad \leq C \bigg[ k^\frac{1}{\p}
    + h^\frac{\gamma}{2} + \max_{n\in\{2,\ldots,N_k\}}
    \|(P_h-\id)X(t_n)\|_{L^2(\Omega;H)}\\
    &\qquad \quad + \big(1+\|P_h\|_{\LB(V)}\big) \Big(k \sum_{n=2}^{N_k}
    \|(R_h-\id)X(t_n)\|_{L^2(\Omega;V)}^2 \Big)^{\frac{1}{2}} \bigg].
  \end{aligned}
\end{equation}
Hereby, $P_h \colon H \to V_h$ denotes the orthogonal projection on $V_h$ with
respect to the inner product in $H$ and $R_h \colon V \to V_h$ is mapping to
the best approximation in $V_h$ with regard to the norm in $V$. This error
estimate is precisely stated in Theorem~\ref{thm:err}. Notice that the order
of convergence also depends on the chosen Galerkin method.

Let us emphasize some important features of our error analysis: First, we do
not apply It\=o's formula since we want to avoid the difficult task to
interpolate the approximation of the two-step BDF2-Maruyama scheme to
continuous time. Second, in contrast to
\cite{gyoengy2009}, we also do not require a priori knowledge of higher spatial
regularity of the exact solution beyond the Gelfand triple $(V,H,V^*)$ since
such regularity results are often not available in the literature and difficult
to verify for nonlinear stochastic evolution equations. Finally, as already
mentioned above, we cannot avoid imposing additional assumptions on the temporal
regularity of the exact solution. However, we only require that the exact
solution has a finite $q$-variation norm (see \eqref{def:p_var})
instead of the (slightly) stronger H\"older continuity condition typically used
in the literature.

As it was observed in \cite{emmrich2009} for deterministic evolution equations,
the following identity plays an important role in the error and stability
analysis of the BDF2 scheme. For all $x_1,x_2,x_3 \in H$ it holds true
that
\begin{equation}
  \label{eqn:id 2-step}
  \begin{aligned}
    4 \Big( \frac{3}{2}x_3-2x_2+\frac{1}{2}x_1, x_3 \Big)_H
    &= \|x_3\|^2_H - \|x_2\|^2_H + \|2x_3-x_2\|^2_H\\
    &\qquad - \|2x_2-x_1\|^2_H + \|x_3-2x_2+x_1\|^2_H.
  \end{aligned}
\end{equation}
This identity also has been utilized in \cite{andersson2017} to derive a strong
convergence rate of the BDF2-Maruyama scheme applied to SODEs. It will also
be crucial to prove \eqref{ineq1:err est}.

The paper is structured as follows. In Section~\ref{sec:prelimi}, we introduce
some notation and recall important concepts related to the abstract analytical
framework, the stochastic integration and the approximation in
infinite-dimensional spaces. Section~\ref{sec:discret} is devoted to
establishing sufficient conditions for the existence of a unique solution to
\eqref{sde_problem} and showing the well-posedness of the BDF2-Maruyama scheme
\eqref{BDF2 scheme} under these conditions. Moreover, we present the stochastic
heat equation as an applicable example. In Section~\ref{sec:converg} we prove
the error estimate \eqref{ineq1:err est} under additional regularity
assumptions. Finally, in Section~\ref{sec:simu}, we provide two numerical
experiments to illustrate our theoretical results and discuss aspects of their
implementation. In particular, the comparison of the schemes \eqref{BDF2 scheme}
and \eqref{BEM scheme} in the temporal error analysis indicates that the
BDF2-Maruyama scheme is favourable for problems driven by noise with higher
spatial regularity or noise with small intensity.

\section{Preliminaries}
\label{sec:prelimi}

In this section, we briefly recall some basic concepts from functional
analysis, stochastic analysis, and numerical analysis which are used throughout
this paper. Mostly, we employ the same notation as in
\cite[Chapter~2]{liu2015} and \cite[Chapter~2]{prevot2007}.

Let $(H,(\cdot,\cdot)_H,\|\cdot\|_H)$ be a real, separable Hilbert space and
let $(V,\|\cdot\|_V)$ be a real, reflexive and separable Banach 
space that is continuously and densely embedded in $H$. We denote the dual
spaces of $H$ and $V$ by $H^*$ and $V^*$, respectively, and use 
$\langle \cdot,\cdot \rangle_{V^* \times V}$ for the dual pairing between $V$
and its dual $V^*$. We consider the Gelfand triple $(V,H,V^*)$ which satisfies
$V \hookrightarrow H \cong H^* \hookrightarrow V^*$ with $\hookrightarrow$
denoting dense and continuous embeddings and $\cong$ the identification of $H$
with its dual space in terms of the Riesz isomorphism. In particular, there
exists $\beta_{V \hookrightarrow H} \in (0,\infty)$ such that
for every $v \in V$ the inequality $\|v\|_H \leq \beta_{V \hookrightarrow H}
\|v\|_V$ holds. In addition, we recall that
\begin{align*}
  \langle u, v \rangle_{V^\ast \times V} = (u,v)_H 
\end{align*}
holds for all $u \in H$ and all $v \in V$.

For $T \in (0,\infty)$ let
$(\Omega,\F,(\F_t)_{t \in [0,T]},\P)$ be a filtered 
probability space satisfying the usual conditions. For $p \in [1,\infty)$
we denote by $L^p(\Omega;V) \coloneqq L^p(\Omega,\mathcal{F},\P;V)$ and
$L^p([0,T] \times \Omega;V) \coloneqq L^p([0,T] \times \Omega,
\mathcal{B}([0,T]) \otimes \mathcal{F}, \diff{t} \otimes \P;V)$
the Bochner--Lebesgue spaces which are, respectively, endowed with the norms
\begin{align*}
  \|X\|_{L^p(\Omega;V)} \coloneqq \big( \mathbb{E} \big[\|X\|_V^p\big]
  \big)^{\frac{1}{p}} \quad \text{and} \quad
  \|X\|_{L^p([0,T] \times \Omega;V)} \coloneqq \Big(
  \mathbb{E} \Big[\int_0^T \|X(t)\|_V^p \diff t \Big]\Big)^{\frac{1}{p}}.
\end{align*}
For an introduction to Bochner--Lebesgue spaces we refer, e.g., to
\cite[Appendix~E]{cohn2013} and \cite[Section~4.2]{papageorgiou2018}.

Next, let $(U,(\cdot,\cdot)_U)$ be a further separable Hilbert space and denote
by $\LB(U,H)$ the Banach space of all linear, bounded
operators from $U$ to $H$. By $\LB_2(U,H)$ we then denote 
the Hilbert space of all 
operators $B \in \LB(U,H)$ with finite Hilbert--Schmidt norm
$\|B\|^2_{\LB_2(U,H)} \coloneqq \Tr(B^\ast B)$. Moreover, for every
non-negative, symmetric operator $Q \in \LB(U) = \LB(U,U)$ there exists a
unique operator $Q^{\frac{1}{2}} \in \LB(U)$ satisfying $Q = Q^{\frac{1}{2}}
\circ Q^{\frac{1}{2}}$.
Then, $U_0 \coloneqq Q^{\frac{1}{2}}(U)$ defines a
Hilbert space if endowed with the inner product
\begin{equation}
  \label{eq2:U0}
  (u,v)_{U_0} \coloneqq (Q^{-\frac{1}{2}}u,Q^{-\frac{1}{2}}v)_U
  \quad \forall u,v \in U_0,
\end{equation}
where $Q^{-\frac{1}{2}}$ denotes the pseudo-inverse of $Q^{\frac{1}{2}}$. For
further details, we refer to \cite[Section~2.3]{liu2015} and
\cite[Section~2.3]{prevot2007}. 

For a given symmetric and non-negative operator $Q \in \LB(U)$ we then 
denote by $W$ a Hilbert space valued Wiener process with respect to the
filtration $(\F_t)_{t \in [0,T]}$ with covariance operator 
$Q$ as defined in \cite[Section~2.1]{prevot2007}. If the covariance operator $Q
\in \LB(U)$ is, in addition, of finite trace, 
then we recall that the Wiener process $W$ takes values in $U$ almost surely
and it has the representation 
\begin{equation}
  \label{eq2:KHexp}
  W(t) = \sum_{j=1}^\infty \sqrt{q_j} \chi_j \beta_j(t), \quad t \in [0,T],
  \quad \P\text{-a.s.}
\end{equation}
Hereby, $\{\chi_j\}_{j \in \N}$ is an orthonormal basis of $U$ which consists of
eigenvectors of $Q$ with summable eigenvalues $q_j \geq 0$ and
$\{\beta_j\}_{j \in \N}$ is a family of independent scalar Brownian motions.

Further, for a given stochastically integrable process $\Phi \colon [0,T]
\times \Omega \to \LB_2(U_0,H)$ we denote the stochastic It\=o-integral of
$\Phi$ by 
\begin{align*}
  \int_0^T \Phi(t) \diff{W(t)}.
\end{align*}
For the construction of a Hilbert space valued Wiener process, the
stochastic It\=o-integral and their properties we again refer to
\cite[Chapter~2]{liu2015}, \cite[Chapter~2]{prevot2007}, as well as
\cite[Chapter~4]{daprato1992}. 
Moreover, we recall from \cite[Section~2.5]{prevot2007} and 
\cite[Section~4.3]{daprato1992} that the construction
of the stochastic It\=o-integral can be extended to the case of a cylindrical
Wiener process whose covariance operator $Q$ is not necessarily of finite
trace.

To measure the regularity of the trajectories of a continuous $V$-valued
stochastic process, we use the concept of finite $\p$-variation for a given
$\p \in [1,\infty)$. A continuous function $f \colon [0,T] \to V$ is of
finite $\p$-variation with respect to the norm $\|\cdot\|_V$ if 
\begin{equation}
  \label{def:p_var}
  \|f\|_{\p-\mathrm{var},V} \coloneqq
  \Big( \sup_\mathcal{P} \sum_{n=1}^{N_\mathcal{P}} \|f(t_n)-f(t_{n-1})\|^\p_V
  \Big)^{\frac{1}{\p}}
   < \infty,
\end{equation}
where the supremum is taken over the set of all finite partitions $\mathcal{P}
= \{t_0,\ldots,t_{N_\mathcal{P}}\}$ of the interval $[0,T]$. For further
details on the concept of the $\p$-variation we refer to
\cite[Section~5]{friz2010} and \cite[Section~1]{lyons2007}.

Finally, we recall some properties of abstract Galerkin methods. 
Let $(V_h)_{h \in (0,1)}$ be a family of finite dimensional subspaces of the
Banach space $V \subset H$ such that for every $v \in H$ it holds $\inf_{v_h
\in V_h} \|v_h-v\|_H \rightarrow 0$ as $h \rightarrow 0$. Such 
a family of subspaces is called an (abstract) \emph{Galerkin scheme}. By $N_h
\in \N$ we denote the dimension of the subspace $V_h$. In addition, the
parameter $h \in (0,1)$ governs the granularity of the Galerkin scheme. In
particular, if $H$ is of infinite dimensions then $N_h = \dim(V_h) \to \infty$ as
$h \to 0$. 

Further, we define $P_h \colon H \rightarrow V_h$ as the orthogonal projection
map onto $V_h$ with respect to the inner product $(\cdot,\cdot)_H$. Therefore,
for each $v \in H$ the element $P_h v \in V_h$ is the best approximation of $v$
in $V_h$ with respect to the norm in $H$, see, e.g.,
\cite[Theorem~5.2]{brezis2011}. Hence, it holds  
\begin{equation*}
  \|P_h v-v\|_H = \mathrm{dist}_H(v,V_h)
  \coloneqq \inf_{v_h \in V_h} \|v_h-v\|_H
  \quad \forall v \in H.
\end{equation*}
If the Banach space $V$ is uniformly convex then for each $v \in V$ there
also exists a unique element $R_h v \in V_h$ that is the best approximation
of $v$ in $V_h$ with respect to the norm in $V$, see
\cite[Exercise~3.32]{brezis2011}. This defines a (possibly nonlinear) map  
$R_h \colon V \rightarrow V_h$, $v \mapsto R_h v$ satisfying
\begin{equation*}
  \|R_h v-v\|_V = \mathrm{dist}_V(v,V_h) 
  \coloneqq \inf_{v_h \in V_h} \|v_h - v\|_V 
  \quad \forall v \in V.
\end{equation*}
Recall that if $V$ is itself a Hilbert space, then it is also uniformly convex,
see \cite[Section~3.7]{brezis2011}. In this case, the mapping $R_h$ coincides 
with the orthogonal projector of $V$ onto $V_h$ with respect to the 
inner product of $V$.

\section{Discretization: a priori estimates and well-posedness}
\label{sec:discret}

The goal of this section is to establish sufficient conditions on the operators
$A$ and $B$ and the initial conditions to ensure the well-posedness of the
numerical scheme \eqref{BDF2 scheme}. For this, we first establish an a priori
estimate for solutions to the numerical scheme for any
value of the spatial refinement parameter $h \in (0,1)$ and every sufficiently
small temporal step size $k = \frac{T}{N_k}$, $N_k \in \N$. Afterwards we also
discuss existence and uniqueness of a solution to this scheme.

Throughout this section, we fix $p \in (1,\infty)$ and a Gelfand triple
$(V,H,V^\ast)$ as in Section~\ref{sec:prelimi}. 

\begin{assumption}
  \label{assump AB1}
  The operators $A \colon V \times \Omega \rightarrow V^*$ and $B \colon V \times
  \Omega \rightarrow \LB_2(U_0,H)$ are measurable with respect to
  $\B(V)\otimes\F_0/\B(V^*)$ and $\B(V)\otimes\F_0/\B(\LB_2(U_0,H))$,
  respectively, where $\B(V)$ denotes the Borel $\sigma$-algebra on $V$. In
  addition, the operator $A$ 
  is hemicontinuous, i.e. the mapping $z \colon [0,1] \to \R$, $\lambda \mapsto
  \langle A(u+ \lambda v,\omega), w \rangle_{V^* \times V}$ is continuous for all
  $u,v,w \in V$ and $\omega \in \Omega$. Moreover, there are
  $\kappa, c \in [0,\infty)$, $\mu \in (0,\infty)$ and $\nu \in [1,\infty)$ such
  that the operators $A$ and $B$ satisfy the monotonicity condition
  \begin{equation}
  \label{con:mono AB}
    \begin{aligned}
    2 \langle A(u)-A(v),u-v \rangle_{V^* \times V} + \kappa \|u-v\|_H^2
    \geq \|B(u)-B(v)\|_{\LB_2(U_0,H)}^2 \quad \text{on } \Omega
  \end{aligned}
  \end{equation}
  for all $u, v \in V$ and the coercivity condition
  \begin{equation}
  \label{con:coer AB}
    2 \langle A(v),v \rangle_{V^* \times V} + \kappa \|v\|_H^2
    \geq \nu \|B(v)\|_{\LB_2(U_0,H)}^2 + \mu \|v\|_V^p - c \quad 
    \text{on } \Omega
  \end{equation}
  for all $v \in V$.
  Furthermore, the growth condition
  \begin{equation} \label{con:growth A}
    \| A(v) \|_{V^*} \leq c (1+\| v \|_V)^{p-1} \quad \text{on } \Omega
  \end{equation}
  is satisfied for all $v \in V$.
\end{assumption}

Before we turn to the numerical scheme \eqref{BDF2 scheme}, we
mention that Assumption~\ref{assump AB1} is sufficient to ensure the
existence of a uniquely determined exact solution to \eqref{sde_problem}, which
we define in the same way as in \cite[Definition~4.2.1]{prevot2007}. More
precisely, let $X_0 \in L^2(\Omega,\F_0,\P;H)$ be the initial value. 
Then, we call a continuous, $H$-valued and $(\F_t)_{t \in [0,T]}$-adapted
process $X \in L^p([0,T] \times \Omega;V) \cap L^2([0,T] \times \Omega;H)$
a solution of \eqref{sde_problem} if
\begin{equation}
\label{sde solution}
  X(t) + \int_0^t  A(\bar{X}(s)) \diff s
  = X_0 + \int_0^t B(\bar{X}(s)) \diff W(s)
\end{equation}
holds in $V^\ast$ for all $t \in [0,T]$ almost surely, where $\bar{X}$ is a
$V$-valued, progressively measurable modification of $X$. Such a solution is
said to be unique if any two solutions $X$ and $Y$ to \eqref{sde_problem} are
indistinguishable, i.e.,
\begin{equation}
  \label{eq3:indistinguishable}
  \P \Big( \sup_{t \in [0,T]} \|X(t)-Y(t)\|_H = 0 \Big) = 1.
\end{equation}
For a proof of the following result, we refer to \cite[Section~3]{krylov1981}
and \cite[Chapter~4]{prevot2007}.

\begin{prop}
  \label{prop:exactsol}
  Let Assumption~\ref{assump AB1} be satisfied for some $p \in (1,\infty)$ and
  let $X_0 \in L^2(\Omega,\F_0,\P;H)$. Then the stochastic evolution equation
  \eqref{sde_problem} admits a unique solution.
\end{prop}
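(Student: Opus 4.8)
The plan is to follow the classical variational (Krylov--Rozovskii) strategy: construct the solution as a limit of finite-dimensional Galerkin approximations and identify the limit through a monotonicity argument. First I would fix a countable system $\{e_i\}_{i \in \N} \subset V$ whose linear span is dense in $V$ (and hence in $H$), orthonormalized in $H$, and set $H_n = \mathrm{span}\{e_1,\dots,e_n\}$ with $\pi_n$ the $H$-orthogonal projection onto $H_n$. Projecting \eqref{sde_problem} onto $H_n$ yields a finite-dimensional It\=o SDE for $X_n(t) = \sum_{i=1}^n c_i^n(t)\, e_i$ whose drift $v \mapsto \pi_n A(v)$ is hemicontinuous and whose coefficients satisfy, via \eqref{con:mono AB} and \eqref{con:coer AB}, a one-sided monotonicity bound together with a coercive growth bound. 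Standard existence theory for SDEs with continuous, monotone, coercive coefficients then furnishes a unique global strong solution $X_n$ on $[0,T]$.

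The next step is to derive a priori estimates uniform in $n$. Applying It\=o's formula to $\|X_n(t)\|_H^2$ and invoking \eqref{con:coer AB}, the drift contribution $-2\langle A(X_n),X_n\rangle_{V^*\times V}$ together with the quadratic-variation term $\|B(X_n)\|_{\LB_2(U_0,H)}^2$ combine so that the factor $\nu \geq 1$ absorbs the It\=o correction while the term $\mu\|X_n\|_V^p$ survives. After taking expectations, using the Burkholder--Davis--Gundy inequality on the martingale part and Gronwall's lemma, I expect to obtain
\begin{equation*}
  \sup_{n \in \N} \Big( \E\big[ \sup_{t \in [0,T]} \|X_n(t)\|_H^2\big]
  + \E\int_0^T \|X_n(t)\|_V^p \diff{t} \Big) < \infty.
\end{equation*}
The growth condition \eqref{con:growth A} then bounds $A(X_n)$ in $L^{p'}([0,T]\times\Omega;V^*)$ and \eqref{con:coer AB} bounds $B(X_n)$ in $L^2([0,T]\times\Omega;\LB_2(U_0,H))$. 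By reflexivity of $V$ and $V^*$ I would extract a subsequence along which $X_n \rightharpoonup X$ in $L^p([0,T]\times\Omega;V)$, $A(X_n) \rightharpoonup Y$ in $L^{p'}([0,T]\times\Omega;V^*)$ and $B(X_n) \rightharpoonup Z$ in $L^2([0,T]\times\Omega;\LB_2(U_0,H))$, and passing to the limit in the projected equation show that $X$ solves $X(t) = X_0 - \int_0^t Y(s)\diff{s} + \int_0^t Z(s)\diff{W(s)}$ in $V^*$.

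The crux, and the step I expect to be the main obstacle, is identifying $Y = A(X)$ and $Z = B(X)$; here the monotonicity condition \eqref{con:mono AB} is essential (Minty--Browder trick). The idea is to write It\=o's formula for $\ee^{-\kappa t}\|X_n(t)\|_H^2$, exploit weak lower semicontinuity of the $H$-norm under the weak limit, and test the resulting inequality
\begin{equation*}
  \E\int_0^T \ee^{-\kappa s}\big( 2\langle A(X_n)-A(\phi), X_n-\phi\rangle_{V^*\times V}
  + \kappa\|X_n-\phi\|_H^2 - \|B(X_n)-B(\phi)\|_{\LB_2(U_0,H)}^2 \big)\diff{s} \geq 0
\end{equation*}
against an arbitrary $\phi \in L^p([0,T]\times\Omega;V)$. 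Passing to the limit and then choosing $\phi = X - \lambda v$ for $v \in V$ and $\lambda \downarrow 0$, the hemicontinuity of $A$ forces $Y = A(X)$ and $Z = B(X)$; the delicate point is justifying the It\=o formula in the Gelfand-triple setting and treating the drift and diffusion terms simultaneously inside the monotonicity inequality. Finally, for uniqueness I would apply It\=o's formula to $\|X(t)-\tilde X(t)\|_H^2$ for two solutions, use \eqref{con:mono AB} to obtain $\E\|X(t)-\tilde X(t)\|_H^2 \leq \kappa\int_0^t \E\|X(s)-\tilde X(s)\|_H^2\diff{s}$, and conclude by Gronwall's inequality together with path continuity that \eqref{eq3:indistinguishable} holds.
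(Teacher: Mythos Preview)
Your proposal is correct and follows precisely the classical Krylov--Rozovski\u{\i}/Pr\'ev\^ot--R\"ockner variational strategy (Galerkin approximation, uniform a priori bounds via It\=o's formula and coercivity, weak compactness, Minty--Browder identification, and Gronwall for uniqueness), which is exactly the approach the paper defers to by citing \cite[Section~3]{krylov1981} and \cite[Chapter~4]{prevot2007} in lieu of its own proof.
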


We now turn to the question of well-posedness of the numerical scheme 
\eqref{BDF2 scheme}. As for every two-step scheme it is first necessary to 
find two suitable initial values. The following assumption
is required to ensure adaptedness and square-integrability of the numerical
solution.

\begin{assumption}
  \label{assump IC}
  The initial values $(X^n_{k,h})_{n=0,1}$ satisfy
  \begin{equation*}
    X^n_{k,h} \in L^2(\Omega,\F_{t_n},\P;H) \quad \text{ and } \quad
    B(X^n_{k,h}) \in L^2(\Omega, \F_{t_n}, \P; \LB_2(U_0,H)), \quad
    n \in \{0,1\},
  \end{equation*}
  and $\P(\{ \omega \in \Omega \, : \, X^n_{k,h}(\omega) \in V_h\} ) = 1$
  for each $n \in \{0,1\}$.
\end{assumption}

The BDF2-Maruyama scheme is well-defined if there exists a unique discrete
stochastic process $(X^n_{k,h})_{n=0}^{N_k}$, which is
$(\F_{t_n})_{n=0}^{N_k}$-adapted, $\P$-almost surely $V_h$-valued and solves
the recursion \eqref{BDF2 scheme}. We call such a solution unique if any two
solutions $(X_{k,h}^n)_{n=0}^{N_k}$ and $(Y_{k,h}^n)_{n=0}^{N_k}$ to
\eqref{BDF2 scheme} are indistinguishable, which is understood in the same way
as in \eqref{eq3:indistinguishable}. For the purpose of readability, we omit
the dependence of the discrete solution on the parameters $k$ and $h$
by writing $X^n \coloneqq X^n_{k,h}$ throughout the proofs presented
in Section~\ref{sec:discret} and Section~\ref{sec:converg}.

Before we prove the existence of a unique solution to \eqref{BDF2 scheme}, 
we first derive the following useful \emph{a priori estimate}.

\begin{theorem}
  \label{thm:a priori}
  Let Assumption~\ref{assump AB1} be satisfied for some $p \in (1,\infty)$.
  Let $h \in (0,1)$ and  $k = \frac{T}{N_k}$, $N_k \in \N$, be fixed with
  $2k\kappa < 1$. Let $(X_{k,h}^n)_{n=0}^{N_k}$ be an arbitrary
  $(\F_{t_n})_{n=0}^{N_k}$-adapted and $\P$-almost surely $V_h$-valued process
  satisfying Assumption~\ref{assump IC} and \eqref{BDF2 scheme}.
  Then, it holds
  \begin{equation}
  \label{ineq:a priori}
    \begin{aligned}
    &\max_{n\in\{2,\ldots,N_k\}}\Big( \E \big[\|X^n_{k,h}\|^2_H\big]
    + 2k\nu \E \big[\|B(X^n_{k,h})\|^2_{\LB_2(U_0,H)}\big] \Big)\\
    &\qquad + 2k\mu \sum_{n=2}^{N_k} \E \big[\|X^n_{k,h}\|^p_V\big]
    + \frac{\nu-1}{\nu} \sum_{n=2}^{N_k} \E \big[\|X^n_{k,h}-2 X^{n-1}_{k,h} +
    X^{n-2}_{k,h}\|^2_H\big]\\   
    &\quad \leq C_k \Big( T + \sum_{n=0}^{1} \E \big[\|X^n_{k,h}\|_H^2\big]
    + k \sum_{n=0}^{1} \E \big[\|B(X^n_{k,h})\|^2_{\LB_2(U_0,H)}\big] \Big),
    \end{aligned}
  \end{equation}
  where $C_k = 2\max\{2c,11,2\nu\} (1-2k\kappa)^{-1}
  \ee^{2\kappa T (1-2k\kappa)^{-1}}$.
\end{theorem}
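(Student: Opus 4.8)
The plan is to derive a discrete energy identity by testing the scheme against the current iterate, to convert it into a telescoping estimate by means of the algebraic identity \eqref{eqn:id 2-step}, and to close the resulting recursion with the coercivity condition \eqref{con:coer AB} and a discrete Gronwall argument. The delicate point throughout will be the stochastic term, since the iterate $X^n$ depends on the forward increment $\Delta_k W^n$ and the two-step structure couples two consecutive Wiener increments.

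First, I would fix $m \in \{2,\ldots,N_k\}$ and test \eqref{BDF2 scheme} with the (random) element $v = X^n \in V_h$; this is admissible because the finite-dimensional identity holds simultaneously for all $v \in V_h$ with probability one. Multiplying by $4$ and invoking \eqref{eqn:id 2-step} with $(x_1,x_2,x_3) = (X^{n-2},X^{n-1},X^n)$ converts the left-hand side into $\mathcal E_n - \mathcal E_{n-1} + \|X^n - 2X^{n-1} + X^{n-2}\|_H^2 + 4k\langle A(X^n),X^n\rangle_{V^*\times V}$, where $\mathcal E_n := \|X^n\|_H^2 + \|2X^n - X^{n-1}\|_H^2$ is the BDF2 energy that telescopes upon summation. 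I would then apply \eqref{con:coer AB} to the drift term, producing the favourable contributions $2k\nu\|B(X^n)\|_{\LB_2(U_0,H)}^2 + 2k\mu\|X^n\|_V^p$ at the cost of $-2kc$ and $-2k\kappa\|X^n\|_H^2$; summing over $n$ from $2$ to $m$ then yields, up to the stochastic term, the skeleton of \eqref{ineq:a priori}.

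The main obstacle is the stochastic term $S_n := 4\big(\tfrac32 B(X^{n-1})\Delta_k W^n - \tfrac12 B(X^{n-2})\Delta_k W^{n-1}, X^n\big)_H$. I would split $X^n = (2X^{n-1}-X^{n-2}) + (X^n - 2X^{n-1}+X^{n-2})$: the first summand is $\F_{t_{n-1}}$-measurable, so its pairing with the forward-increment part $B(X^{n-1})\Delta_k W^n$ has vanishing expectation, while the second difference $X^n - 2X^{n-1}+X^{n-2}$ is controlled by the Cauchy--Schwarz and Young inequalities and absorbed into the term $\|X^n - 2X^{n-1}+X^{n-2}\|_H^2$ supplied by \eqref{eqn:id 2-step}. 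Choosing the Young parameter proportional to $\nu$ is exactly what leaves the coefficient $\tfrac{\nu-1}{\nu}$ in front of this term and uses the standing hypothesis $\nu \geq 1$. The genuinely technical part is the lagged increment $B(X^{n-2})\Delta_k W^{n-1}$, which is $\F_{t_{n-1}}$-measurable and hence does not pair to zero against $X^n$; here I would re-index the sum over $n$ so that its non-vanishing contributions are re-expressed through the second differences of neighbouring steps and estimated, again by Young's inequality, together with the It\^o isometry $\E\|B(X^{j})\Delta_k W^{j+1}\|_H^2 = k\,\E\|B(X^{j})\|_{\LB_2(U_0,H)}^2$. The Young parameters must be balanced so that, after re-indexing, the accumulated $k\,\E\|B(X^{j})\|^2$ contributions are dominated by the summed coercive terms $2k\nu\|B(X^n)\|^2$, leaving only the initial-data norms $\|B(X^0)\|^2$ and $\|B(X^1)\|^2$ on the right; this simultaneous bookkeeping of the $\|\cdot\|_H^2$-budget and the $\|B(\cdot)\|^2$-budget is where the explicit constant $\max\{2c,11,2\nu\}$ originates.

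Finally, after taking expectations and collecting terms, I would bound $\E\mathcal E_1$ and the surviving boundary terms by the initial data $\sum_{n=0}^1 \E\|X^n\|_H^2 + k\sum_{n=0}^1\E\|B(X^n)\|_{\LB_2(U_0,H)}^2$, and $\sum 2kc \le 2cT$ using $k(m-1)\le T$. The remaining term $2k\kappa\sum_{n=2}^m\E\|X^n\|_H^2 \le 2k\kappa\sum_{n=2}^m \E\mathcal E_n$ is treated by a discrete Gronwall lemma: the hypothesis $2k\kappa<1$ permits absorbing the top-index contribution onto the left-hand side, which accounts for the factor $(1-2k\kappa)^{-1}$, while the iteration produces the exponential $\ee^{2\kappa T (1-2k\kappa)^{-1}}$. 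Since the resulting bound is uniform in $m$ and $\mathcal E_m \ge \|X^m\|_H^2$, taking the maximum over $m \in \{2,\ldots,N_k\}$ yields \eqref{ineq:a priori}.
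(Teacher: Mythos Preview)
Your overall architecture is the paper's: test \eqref{BDF2 scheme} against $X^n$, invoke the algebraic identity \eqref{eqn:id 2-step} to produce the telescoping BDF2 energy $\mathcal E_n=\|X^n\|_H^2+\|2X^n-X^{n-1}\|_H^2$, apply coercivity \eqref{con:coer AB} to the drift, absorb a $\tfrac{1}{\nu}$-fraction of the second-difference term via Young, and close with a discrete Gronwall argument under $2k\kappa<1$. That part is fine and matches the paper exactly.

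The gap is in your treatment of the stochastic term. Splitting $X^n=(2X^{n-1}-X^{n-2})+D^n$ with $D^n=X^n-2X^{n-1}+X^{n-2}$ leaves the full coefficient $3$ on the forward increment in the pairing $(3B(X^{n-1})\Delta_kW^n,\,D^n)$. After Young with weight $\nu$ and the It\=o isometry this contributes $9\nu k\,\E\|B(X^{n-1})\|_{\LB_2(U_0,H)}^2$, whereas coercivity supplies only $2\nu k\,\E\|B(X^n)\|_{\LB_2(U_0,H)}^2$; summed over $n$ these do not balance, and the estimate does not close regardless of how you re-index the lagged term $-(B(X^{n-2})\Delta_kW^{n-1},X^n)$. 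Your ``re-indexing'' gesture does not resolve this, because the obstruction already lives in the forward part.

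What the paper does instead is the sharper algebraic decomposition \eqref{eq:ap_2}: it writes
\begin{align*}
(3B^{n-1}\Delta_kW^n-B^{n-2}\Delta_kW^{n-1},X^n)_H
&=(B^{n-1}\Delta_kW^n-B^{n-2}\Delta_kW^{n-1},D^n)_H\\
&\quad+(B^{n-1}\Delta_kW^n,2X^n-X^{n-1})_H\\
&\quad-(B^{n-2}\Delta_kW^{n-1},2X^{n-1}-X^{n-2})_H\\
&\quad+(B^{n-1}\Delta_kW^n,3X^{n-1}-X^{n-2})_H,
\end{align*}
where $B^j:=B(X^j)$. The last line has zero expectation; the middle two lines form an exact telescoping pair and collapse to boundary terms upon summation; and the first line now carries only coefficient $1$ on each noise term, so Young plus It\=o isometry yields $\nu k\big(\E\|B^{n-1}\|^2+\E\|B^{n-2}\|^2\big)$, which after summing matches the $2\nu k\sum\E\|B^n\|^2$ from coercivity up to two boundary summands. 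This telescoping extraction is the missing ingredient in your outline; once you insert it, the rest of your plan (boundary terms bounded by the initial data, then Gronwall) goes through verbatim and yields the stated constant.
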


\begin{proof}
  By an inductive argument we will show that for every $j \in \{1,\ldots,N_k\}$ 
  it holds
  \begin{equation}
  \label{ineq:ap}
    \begin{aligned}
    &\E \big[\|X^j \|^2_H\big]
    + 2k\nu \E \big[\|B(X^j)\|^2_{\LB_2(U_0,H)}\big]
    + 2k\mu \sum_{n=2}^j \E \big[\|X^n \|^p_V\big] \\   
    &\qquad + \frac{\nu-1}{\nu} \Big(
    \E \big[\|2X^j -X^{j-1}\|^2_H\big] + \sum_{n=2}^j
    \E \big[\|X^n -2 X^{n-1} + X^{n-2} \|^2_H\big] \Big)\\    
    &\quad \leq \frac{C_k}{2}
    \Big( T + \sum_{n=0}^{1} \E \big[\|X^n \|_H^2\big]
    + k \sum_{n=0}^{1} \E \big[\|B(X^n)\|^2_{\LB_2(U_0,H)}\big] \Big).
    \end{aligned}
  \end{equation} 
  where we set the two sums on the left-hand side equal to zero in the case
  $j = 1$. Observe that \eqref{ineq:ap} directly implies the estimate 
  \eqref{ineq:a priori}.  Moreover, it immediately follows from
  Assumption~\ref{assump IC} and the choice of $C_k$ that \eqref{ineq:ap} 
  holds true for $j=1$.  
  
  Next, let us assume that the estimate \eqref{ineq:ap} holds true 
  for some fixed $j-1 \in \{1,\ldots,N_k-1\}$. In addition, since
  $(X^n)_{n=0}^{N_k}$ satisfies \eqref{BDF2 scheme} for all $v \in V$, we
  obtain $\P$-almost surely with $v = 4X^n$ that
  \begin{align*}
    &4\Big( \frac{3}{2}X^n-2X^{n-1}+\frac{1}{2}X^{n-2}, X^n \Big)_H
    + 4k \langle A(X^n), X^n \rangle_{V^* \times V}\\
    &\qquad = 4\Big( \frac{3}{2}B(X^{n-1})\Delta_k W^n
    - \frac{1}{2}B(X^{n-2})\Delta_k W^{n-1},X^n \Big)_H
  \end{align*}
  for each $n = 2, \ldots, N_k$. By applying the identity \eqref{eqn:id 2-step},
  summing over $n$ from $2$ to $j$ and taking expectation, we see that
  \begin{equation}
  \label{eq:ap_1}
    \begin{aligned}
      & \E \big[\|X^j\|^2_H\big] + \E \big[\|2X^j-X^{j-1}\|^2_H\big]
      + \sum_{n=2}^j \E \big[\|X^n-2 X^{n-1}+X^{n-2}\|^2_H\big]\\
      &\quad = \E \big[\|X^1\|^2_H\big] + \E \big[\|2X^1-X^0\|^2_H\big]
      + 2 \sum_{n=2}^j (-2k) \E \big[\langle A(X^n),
      X^n \rangle_{V^* \times V}\big]\\
      &\qquad + 2 \sum_{n=2}^j \E \Big[\big( 3 B(X^{n-1})\Delta_k W^n
      - B(X^{n-2})\Delta_k W^{n-1},X^n \big)_H\Big].
    \end{aligned}
  \end{equation}
  An application of the coercivity condition \eqref{con:coer AB} shows that
  \begin{align}
    \label{eq3:Acoerc}
    \begin{split}
      -2k \E \big[\langle A(X^n), X^n \rangle_{V^* \times V}\big]
      &\leq k\kappa \E \big[\|X^n\|_H^2\big]
      - k \nu \E \big[\|B(X^n)\|^2_{\LB_2(U_0,H)}\big]\\
     &\quad - k\mu \E \big[\|X^n\|^p_V\big] + kc.
    \end{split}
  \end{align}
  After some elementary calculations, we obtain the following decomposition
  \begin{equation}
  \label{eq:ap_2}
    \begin{aligned}
    &\big( 3B(X^{n-1})\Delta_k W^n - B(X^{n-2})\Delta_k W^{n-1}, X^n \big)_H\\
    &\quad= \big( B(X^{n-1})\Delta_k W^n-B(X^{n-2})\Delta_k W^{n-1},
    X^n-2X^{n-1}+X^{n-2} \big)_H\\ 
    &\qquad + \big( B(X^{n-1})\Delta_k W^n, 2X^n-X^{n-1} \big)_H\\
    &\qquad - \big( B(X^{n-2})\Delta_k W^{n-1}, 2X^{n-1}-X^{n-2} \big)_H\\ 
    &\qquad + \big( B(X^{n-1})\Delta_k W^n, 3X^{n-1}-X^{n-2} \big)_H.
    \end{aligned}
  \end{equation}
  Since the random variables $(3X^{n-1}-X^{n-2})$ and $B(X^{n-2})
  \Delta_k W^{n-1}$ are $\F_{t_{n-1}}$-measurable and integrable for every $n
  \le j$, we use the
  martingale property of the stochastic integral to deduce
  \begin{equation*}
    \E \big[ \big( B(X^{n-1})\Delta_k W^n, 3X^{n-1}-X^{n-2} \big)_H \big] = 0
  \end{equation*}
  as well as
  \begin{equation*}
    \E \big[ \big( B(X^{n-1})\Delta_k W^n,
    B(X^{n-2}) \Delta_k W^{n-1} \big)_H \big] = 0.
  \end{equation*}
  By applying Young's inequality with weight $\nu$ to the decomposition
  \eqref{eq:ap_2} and taking expectation, we conclude that
  \begin{align*}
    &\E\big[\big( 3B(X^{n-1})\Delta_k W^n - B(X^{n-2})\Delta_k W^{n-1},
    X^n \big)_H\big]\\
    &\quad \leq \frac{\nu}{2} \Big( \E \big[\|B(X^{n-1})\Delta_k W^n\|_H^2\big]
    + \E \big[\|B(X^{n-2}) \Delta_k W^{n-1}\|_H^2\big] \Big)\\
    &\qquad + \frac{1}{2\nu} \E \big[\|X^n-2X^{n-1}+X^{n-2}\|_H^2\big]
    + \E \big[\big( B(X^{n-1})\Delta_k W^n,2X^n-X^{n-1} \big)_H\big]\\
    &\qquad - \E \big[\big( B(X^{n-2})\Delta_k W^{n-1},
    2X^{n-1}-X^{n-2} \big)_H\big].
  \end{align*}
  Inserting this and \eqref{eq3:Acoerc} into equation
  \eqref{eq:ap_1} then gives
  \begin{align*}
    & \E \big[\|X^j\|^2_H\big] + \E \big[\|2X^j-X^{j-1}\|^2_H\big]
    + \frac{\nu-1}{\nu}\sum_{n=2}^j \E \big[\|X^n-2 X^{n-1}+X^{n-2}\|^2_H\big]\\
    &\quad = \E \big[\|X^1\|^2_H\big] + \E \big[\|2X^1-X^0\|^2_H\big]
    + 2k\kappa \sum_{n=2}^j \E \big[\|X^n\|_H^2\big]\\
    &\qquad -2k\nu \sum_{n=2}^j \E \big[\|B(X^n)\|^2_{\LB_2(U_0,H)}\big]
    -2k\mu \sum_{n=2}^j \E \big[\|X^n\|^p_V\big] + 2kc (j-1)\\
    &\qquad + \nu \sum_{n=2}^j \Big( \E \big[\|B(X^{n-1})\Delta_k W^n\|_H^2\big]
    + \E \big[\|B(X^{n-2})\Delta_k W^{n-1}\|_H^2\big] \Big)\\
    &\qquad + 2 \E \big[\big( B(X^{j-1})\Delta_k W^j, 2X^j -X^{j-1} \big)_H\big]
    - 2 \E \big[\big( B(X^0)\Delta_k W^1, 2X^1-X^0 \big)_H\big].
  \end{align*}
  Applying again Young's inequality with weight $\nu$ and rearranging the terms
  yield
  \begin{align*}
    &(1-2k\kappa) \E \big[\|X^j\|^2_H\big]
    + 2k\nu \E \big[\|B(X^j)\|^2_{\LB_2(U_0,H)}\big]
    + 2k\mu \sum_{n=2}^j \E \big[\|X^n\|^p_V\big] \\
    &\qquad + \frac{\nu-1}{\nu} \Big( \E \big[\|2X^j-X^{j-1}\|^2_H\big]
    + \sum_{n=2}^j \E \big[\|X^n-2 X^{n-1}+X^{n-2}\|^2_H\big] \Big)\\
    &\quad \leq \E \big[\|X^1\|^2_H\big] 
    + \frac{\nu+1}{\nu} \E \big[\|2X^1-X^0\|^2_H\big]
    + 2k\kappa \sum_{n=2}^{j-1} \E \big[\|X^n\|_H^2\big] + 2Tc \\
    &\qquad -2k\nu \sum_{n=2}^{j-1} \E \big[\|B(X^n)\|^2_{\LB_2(U_0,H)}\big]
    + 2\nu \sum_{n=1}^j \E \big[\|B(X^{n-1})\Delta_k W^n\|_H^2\big].
  \end{align*}
  Next, due to the It\=o isometry the last two sums on the right-hand side
  almost cancel each other up to two summands.
  Moreover, a further application of Young's inequality yields
  \begin{align*}
    \|2X^1-X^0\|_H^2 = 4 \|X^1\|_H^2 - 4 (X_1, X_0)_H + \|X^0\|_H^2
    \leq 5 \big(\|X^1\|_H^2 + \|X^0\|_H^2).
  \end{align*}
  Since $1 > 1-2k\kappa > 0$  and $\frac{\nu+1}{\nu} \le 2$ by assumption, we
  obtain 
  \begin{align*}
    &\E \big[\|X^j\|^2_H\big]
    + 2k\nu \E \big[\|B(X^j)\|^2_{\LB_2(U_0,H)}\big]
    + 2k\mu \sum_{n=2}^j \E \big[\|X^n\|^p_V\big] \\
    &\qquad + \frac{\nu-1}{\nu} \Big( \E \big[\|2X^j-X^{j-1}\|^2_H\big]
    + \sum_{n=2}^j \E \big[\|X^n-2 X^{n-1}+X^{n-2}\|^2_H\big] \Big)\\
    &\quad \leq \frac{2k\kappa}{1-2k\kappa} \sum_{n=2}^{j-1}
    \E \big[\|X^n\|_H^2\big] + \frac{2Tc}{1-2k\kappa}\\
    &\qquad + \frac{1}{1-2k\kappa}
    \Big(11 \sum_{n=0}^{1} \E \big[\|X^n\|_H^2\big]
    + 2k\nu \sum_{n=0}^{1} \E \big[\|B(X^n)\|^2_{\LB_2(U_0,H)}\big] \Big).
  \end{align*}
  Applying a discrete version of Gronwall's inequality, see, e.g.,
  \cite{clark1987}, yields the estimate \eqref{ineq:ap} and hence the result.
\end{proof}

Under the assumptions stated in this section, the existence and uniqueness of a
solution to implicit methods such as the BDF2-Mayurama scheme \eqref{BDF2 scheme}
and the BEM scheme \eqref{BEM scheme} can be proven through techniques from
nonlinear PDE theory. These techniques rely on the monotonicity condition
\eqref{con:mono AB} and have been used to show well-posedness of the one-step
BEM scheme applied to nonlinear stochastic evolution equations, see, e.g.,
\cite[Theorem~3.3]{emmrich2017} or \cite[Theorem~2.9]{gyoengy2005}. Here, we
adapt this approach to the multi-step BDF2-Mayurama scheme in order to prove
well-posedness.

\begin{theorem}
\label{thm:scheme}
  Let Assumption~\ref{assump AB1} be satisfied for some $p \in (1,\infty)$. Let
  $h \in (0,1)$ and $k = \frac{T}{N_k}$, $N_k \in \N$, be fixed with
  $k\kappa \leq 3$ and let some initial values $(X^n_{k,h})_{n=0,1}$ satisfy
  Assumption~\ref{assump IC}. Then the numerical scheme \eqref{BDF2 scheme} has
  a unique $(\F_{t_n})_{n=0}^{N_k}$-adapted and $\P$-almost surely $V_h$-valued
  solution $(X_{k,h}^n)_{n=0}^{N_k}$. In addition, if $2k\kappa < 1$ holds, then
  the random variables $X_{k,h}^n$ are
  $L^p(\Omega;V) \cap L^2(\Omega;H)$-integrable for every
  $n \in \{2,\ldots,N_k\}$.
\end{theorem}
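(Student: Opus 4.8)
The plan is to argue by induction over the time level $n$, constructing $X^n_{k,h}$ from the already-known $X^{n-1}_{k,h}$ and $X^{n-2}_{k,h}$. Fixing $\omega \in \Omega$ and abbreviating $X^n = X^n_{k,h}$, the recursion \eqref{BDF2 scheme} amounts to finding $u \in V_h$ such that
\begin{equation*}
  \tfrac{3}{2}(u,v)_H + k\langle A(u,\omega),v\rangle_{V^*\times V} = (g,v)_H
  \quad \text{for all } v \in V_h,
\end{equation*}
where $g = g(\omega) = 2X^{n-1} - \tfrac12 X^{n-2} + \tfrac32 B(X^{n-1})\Delta_k W^n - \tfrac12 B(X^{n-2})\Delta_k W^{n-1} \in H$ collects all terms already determined at time level $n$. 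Since $V_h$ is finite-dimensional, I would identify $V_h^*$ with $V_h$ through the restriction of $(\cdot,\cdot)_H$ and recast this as a root-finding problem $F(u) = 0$ for the map $F \colon V_h \to V_h$ defined by $(F(u),v)_H = \tfrac32(u,v)_H + k\langle A(u,\omega),v\rangle_{V^*\times V} - (g,v)_H$. This reduces well-posedness of the scheme to the pathwise solvability of a finite-dimensional monotone operator equation, a setting in which the techniques used for the backward Euler--Maruyama scheme in \cite{gyoengy2005, emmrich2017} apply.

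For fixed $\omega$ the map $F$ is continuous: the hemicontinuity of $A$ together with the monotonicity \eqref{con:mono AB} and the growth bound \eqref{con:growth A} makes $A$ demicontinuous, hence continuous from the finite-dimensional $V_h$ into $V_h^*$. Existence of a zero follows from coercivity. Testing with $v = u$ and invoking \eqref{con:coer AB} yields
\begin{equation*}
  (F(u),u)_H \geq \tfrac{3-k\kappa}{2}\|u\|_H^2 + \tfrac{k\mu}{2}\|u\|_V^p
  - \tfrac{kc}{2} - \beta_{V\hookrightarrow H}\|g\|_H\|u\|_V,
\end{equation*}
and since $k\kappa \le 3$ and $p > 1$ the dominant term $\tfrac{k\mu}{2}\|u\|_V^p$ forces $(F(u),u)_H > 0$ on a sufficiently large sphere $\{\|u\|_V = R\}$. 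The corollary of Brouwer's fixed point theorem (the \emph{acute angle lemma}) then produces a zero $u$ with $\|u\|_V \le R$. Uniqueness rests again on monotonicity: using \eqref{con:mono AB} once more gives $(F(u_1)-F(u_2),u_1-u_2)_H \ge \tfrac{3-k\kappa}{2}\|u_1-u_2\|_H^2$, which is strictly positive for $u_1 \ne u_2$ as long as $k\kappa < 3$, the injectivity of $V_h \hookrightarrow H$ transferring this back to $V_h$. The bound $k\kappa \le 3$ is precisely what the $\tfrac32$-weight of the BDF2 method buys in place of the $k\kappa < 2$ threshold of the backward Euler scheme.

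The step I expect to be the main obstacle is \emph{measurability and adaptedness} of the solution, since the preceding argument only produces, for each $\omega$ separately, a pathwise solution $u(\omega)$. Here I would exploit that $\F_{t_n}$-measurability of the data persists through the construction: by Assumption~\ref{assump AB1} the operator $A(v,\cdot)$ is $\F_0$-measurable for each $v$, while $X^{n-1}, X^{n-2}$ and $\Delta_k W^n, \Delta_k W^{n-1}$ are $\F_{t_n}$-measurable by the induction hypothesis and the definition of the increments, so $(u,\omega)\mapsto (F(u),\cdot)_H$ is a Carath\'eodory map---continuous in $u$, $\F_{t_n}$-measurable in $\omega$---and therefore jointly measurable. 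Its zero set has a measurable graph, and since the zero is unique, a measurable selection theorem (e.g.\ Kuratowski--Ryll-Nardzewski, as used for the one-step scheme in \cite{gyoengy2005, emmrich2017}) shows that $\omega \mapsto X^n(\omega)$ is $\F_{t_n}$-measurable. This closes the induction and establishes adaptedness together with the $\P$-almost sure $V_h$-valuedness.

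Finally, the integrability statement under the stronger restriction $2k\kappa < 1$ I would obtain from the a priori estimate of Theorem~\ref{thm:a priori}. As that estimate presupposes finiteness of the occurring moments, I would run a short secondary induction: assuming $X^{n-1}, X^{n-2} \in L^2(\Omega;H)$ with $B(X^{n-1}), B(X^{n-2}) \in L^2(\Omega;\LB_2(U_0,H))$, I would test \eqref{BDF2 scheme} with $v = 4X^n$, apply the algebraic identity \eqref{eqn:id 2-step}, the coercivity \eqref{con:coer AB}, the It\=o isometry and Young's inequality exactly as in the proof of Theorem~\ref{thm:a priori}---where $2k\kappa < 1$ guarantees the positive leading coefficient $1 - 2k\kappa$---to bound $\E[\|X^n\|_H^2]$, $\E[\|X^n\|_V^p]$ and $\E[\|B(X^n)\|_{\LB_2(U_0,H)}^2]$ by the finite data of the two previous levels. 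This propagates $L^p(\Omega;V)\cap L^2(\Omega;H)$-integrability from the initial values (Assumption~\ref{assump IC}) to every $n \in \{2,\ldots,N_k\}$ and completes the proof.
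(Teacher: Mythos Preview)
Your proposal is correct and takes essentially the same approach as the paper: reduce each implicit step to a finite-dimensional root-finding problem, obtain existence via demicontinuity, coercivity and a Brouwer-type corollary (stated in the paper as Lemma~\ref{lem:zero}), uniqueness via monotonicity, and adaptedness via a measurable selection argument---the paper invokes Lemma~\ref{lem:meas_zero} from \cite{eisenmann2019} rather than Kuratowski--Ryll-Nardzewski, but the mechanism is identical since the zero is unique. For integrability the paper simply appeals to Theorem~\ref{thm:a priori}; your secondary induction is not needed as a separate step, since the inductive structure of that theorem's proof already propagates finiteness of the moments from one level to the next.
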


The following lemmas are needed to show existence as well as adaptedness of
a discrete solution to the numerical scheme \eqref{BDF2 scheme}. A proof of
each result can be found, respectively, in \cite[Section~9.1]{evans1998} and
\cite[Lemma~4.3]{eisenmann2019}.

\begin{lemma}
\label{lem:zero}
  Let $R \in (0,\infty)$ and let $f \colon \R^N \to \R^N$, $N \in \N$, be a
  continuous function. If $f(x) \cdot x \geq 0$ holds for every $x \in \R^N$
  with $\|x\|_2 = R$, where $\|\cdot\|_2$ denotes the Euclidean norm, then
  there exists $x_0 \in \R^N$ with $\|x_0\|_2 \leq R$ satisfying $f(x_0) = 0$.
\end{lemma}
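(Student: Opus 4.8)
The plan is to deduce this as a corollary of Brouwer's fixed point theorem through a contradiction argument. Suppose, for contradiction, that $f$ has no zero on the closed ball $\overline{B}_R \coloneqq \{x \in \R^N : \|x\|_2 \le R\}$; in particular $f(x) \ne 0$ for every such $x$, so $\|f(x)\|_2 > 0$ throughout $\overline{B}_R$. Under this assumption I can define the auxiliary map
\begin{equation*}
  g \colon \overline{B}_R \to \overline{B}_R, \qquad
  g(x) \coloneqq -R \frac{f(x)}{\|f(x)\|_2},
\end{equation*}
which is well-defined and continuous as a composition of continuous maps with a nowhere-vanishing denominator, and which takes values in $\overline{B}_R$ since in fact $\|g(x)\|_2 = R$ for every $x \in \overline{B}_R$.

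Since $\overline{B}_R$ is a nonempty, compact, convex subset of $\R^N$, Brouwer's fixed point theorem yields a point $x_0 \in \overline{B}_R$ with $g(x_0) = x_0$. The key observation is that every value of $g$ lies on the sphere, so $\|x_0\|_2 = \|g(x_0)\|_2 = R$; hence $x_0$ is precisely a point at which the sign hypothesis applies. Taking the Euclidean inner product of the fixed-point identity $x_0 = -R\, f(x_0)/\|f(x_0)\|_2$ with $f(x_0)$ then gives
\begin{equation*}
  f(x_0) \cdot x_0 = -R \frac{\|f(x_0)\|_2^2}{\|f(x_0)\|_2}
  = -R \|f(x_0)\|_2 < 0,
\end{equation*}
where strict negativity uses $R > 0$ and $f(x_0) \ne 0$. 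This contradicts the hypothesis $f(x_0) \cdot x_0 \ge 0$ on the sphere $\|x\|_2 = R$. Therefore $f$ must vanish at some $x_0 \in \overline{B}_R$, which is the assertion.

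There is no serious analytic obstacle here; the whole argument is a structured reduction to Brouwer's theorem. The one point that requires care is the logical setup of the contradiction: the sign condition is assumed only on the sphere, so the argument must first produce a candidate point that is guaranteed to lie on the sphere before the hypothesis can be invoked — and this is exactly what the normalization by $\|f\|_2$ achieves, since it forces $\|g(x)\|_2 = R$ and hence $\|x_0\|_2 = R$ at any fixed point. An alternative route, if one prefers not to appeal to Brouwer's theorem directly, is to use the homotopy invariance of the Brouwer degree: assuming $f \ne 0$ on the sphere (otherwise a boundary zero already proves the claim), the homotopy $H_t(x) \coloneqq t f(x) + (1-t) x$ satisfies $H_t(x) \cdot x = t\, f(x) \cdot x + (1-t) R^2 > 0$ on $\|x\|_2 = R$ for every $t \in [0,1)$, so $H_t$ is admissible on $[0,1]$ and $\deg(f, B_R, 0) = \deg(\id, B_R, 0) = 1 \ne 0$, which again forces a zero of $f$ in $B_R \subset \overline{B}_R$.
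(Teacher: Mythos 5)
Your proof is correct and is essentially the same argument the paper relies on: the paper does not prove Lemma~\ref{lem:zero} itself but refers to \cite[Section~9.1]{evans1998}, where exactly this contradiction argument appears---assume $f$ has no zero, apply Brouwer's fixed point theorem to the normalized map $x \mapsto -R\,f(x)/\|f(x)\|_2$, note the fixed point lies on the sphere, and contradict the sign hypothesis there. Your additional degree-theoretic route is a valid alternative but is not needed.
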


\begin{lemma}
\label{lem:meas_zero}	
  Let $(\Omega,\F,(\F_t)_{t \in [0,T]},\P)$ be a filtered probability space.
  Let $\F_t$ be a complete sub-$\sigma$-algebra  of $\F$ for fixed
  $t \in [0,T]$ and let $\Omega' \in \F_t$ with $\P(\Omega') = 1$. Further,
  let the function
  $f \colon \Omega \times \R^N \to \R^N$, $N \in \N$, be $\F_t$-measurable
  in the first argument for every $x \in \R^N$ and continuous in the second
  argument for every $\omega \in \Omega'$. Moreover, assume for each
  $\omega \in \Omega'$ that the equation $f(\omega,x) = 0$ has a unique
  solution $x(\omega) \in \R^N$. Then the mapping
  \begin{equation*}
    x \colon \Omega \to \R^N, \quad \omega \mapsto
    \begin{cases}
      x(\omega) &\text{for } \omega \in \Omega',\\
      0 &\text{otherwise},
    \end{cases}
  \end{equation*}
  is $\F_t$-measurable.
\end{lemma}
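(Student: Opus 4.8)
The plan is to verify $\F_t$-measurability of the selection $x$ by showing that the preimage of every closed Euclidean ball with rational data is $\F_t$-measurable; since such balls generate $\B(\R^N)$, this suffices. Fix $a \in \mathbb{Q}^N$ and $r \in \mathbb{Q} \cap (0,\infty)$ and write $\overline{B}(a,r) = \{y \in \R^N : |y-a| \le r\}$. The heart of the argument is the set identity
\begin{equation*}
  \{\omega \in \Omega' : x(\omega) \in \overline{B}(a,r)\}
  = \Big\{\omega \in \Omega' : \inf_{y \in \overline{B}(a,r) \cap \mathbb{Q}^N}
  |f(\omega,y)| = 0\Big\},
\end{equation*}
where $x(\omega)$ denotes the unique zero of $f(\omega,\cdot)$ for $\omega \in \Omega'$.

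First I would prove this identity, which is where the compactness and uniqueness hypotheses are indispensable. For $\omega \in \Omega'$ the map $y \mapsto |f(\omega,y)|$ is continuous, so on the compact ball $\overline{B}(a,r)$ it attains its infimum, and by density of $\mathbb{Q}^N$ together with continuity this infimum equals the infimum over $\overline{B}(a,r) \cap \mathbb{Q}^N$. Consequently the right-hand infimum vanishes if and only if $f(\omega,\cdot)$ possesses a zero inside $\overline{B}(a,r)$. If $x(\omega) \in \overline{B}(a,r)$ then $x(\omega)$ is such a zero; conversely, any zero in the ball must coincide with the globally unique solution $x(\omega)$, forcing $x(\omega) \in \overline{B}(a,r)$. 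I expect this compactness-plus-uniqueness step—linking the zero level set of the infimum to membership of the selection in the ball—to be the main obstacle, since it is precisely where the global uniqueness assumption is used and where one must rule out the infimum vanishing without an actual zero lying in the ball.

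It then remains to read off measurability and to patch across the null set. For each fixed $y \in \mathbb{Q}^N$ the function $\omega \mapsto |f(\omega,y)|$ is $\F_t$-measurable by the assumed $\F_t$-measurability of $f(\cdot,y)$ and continuity of the norm, so the countable infimum $\omega \mapsto \inf_{y \in \overline{B}(a,r) \cap \mathbb{Q}^N} |f(\omega,y)|$ is $\F_t$-measurable and its zero set intersected with $\Omega' \in \F_t$ belongs to $\F_t$; by the identity above this set is exactly $\{\omega \in \Omega' : x(\omega) \in \overline{B}(a,r)\}$. The full preimage satisfies
\begin{equation*}
  \{\omega \in \Omega : x(\omega) \in \overline{B}(a,r)\}
  = \{\omega \in \Omega' : x(\omega) \in \overline{B}(a,r)\} \cup N,
\end{equation*}
where $N \coloneqq \{\omega \in \Omega \setminus \Omega' : x(\omega) \in \overline{B}(a,r)\}$ is a subset of the $\P$-null set $\Omega \setminus \Omega'$. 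Since $\P(\Omega \setminus \Omega') = 0$ and $\F_t$ is complete, we have $N \in \F_t$, so the whole preimage lies in $\F_t$. As the rational closed balls generate $\B(\R^N)$, this establishes that $x$ is $\F_t$-measurable.
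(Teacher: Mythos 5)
Your proof is correct and self-contained. Note that the paper itself does not prove Lemma~\ref{lem:meas_zero}; it only refers to \cite[Lemma~4.3]{eisenmann2019}, and the argument given there is of the same type as yours: one tests membership of the zero $x(\omega)$ in a compact set by a countable family of rational evaluation points of $f(\omega,\cdot)$, uses compactness to upgrade ``infimum equal to zero'' to ``an actual zero exists in the set,'' invokes the global uniqueness of the zero to identify that zero with $x(\omega)$, and handles $\Omega\setminus\Omega'$ via completeness. Your choice of closed balls $\overline{B}(a,r)$ with rational center and radius is a clean variant: it makes the density of $\overline{B}(a,r)\cap\mathbb{Q}^N$ in the ball immediate and lets you replace the subsequence-extraction step (needed for general compact sets, where rational points of the set itself need not be dense) by the simpler observation that a continuous function attains its infimum on the compact ball. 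All steps check out: the set identity, the $\F_t$-measurability of the countable infimum, and the exceptional set $N\subset\Omega\setminus\Omega'$, which is covered by completeness (and in fact even more directly, since $x\equiv 0$ off $\Omega'$, so $N$ is either $\emptyset$ or $\Omega\setminus\Omega'$, both of which lie in $\F_t$ because $\Omega'\in\F_t$).
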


\begin{proof}[Proof of Theorem~\ref{thm:scheme}]
  First, we will show by an inductive argument over $n \in \{1,\ldots,N_k\}$
  the existence and the $\P$-almost sure uniqueness of random variables $X^n$
  which solve \eqref{BDF2 scheme} and are $\F_{t_n}$-measurable as well as
  almost surely $V_h$-valued. Notice that this assertion follows for $n=1$ from
  Assumption~\ref{assump IC}. Hence we assume that $\F_{t_j}$-measurable and
  $\P$-almost surely $V_h$-valued random variables $X^j$ satisfying
  \eqref{BDF2 scheme} exist for $j = 0, \dots, n-1$, $n \ge 2$.
	
  Let $(\phi_i)^{N_h}_{i = 1}$ be a basis of the $N_h$-dimensional subspace
  $V_h \subset V$. We will identify uniquely every $X \in V_h$ with a vector
  $\boldsymbol{X} = (\boldsymbol{X}_1, \ldots, \boldsymbol{X}_{N_h})^T \in
  \R^{N_h}$ by the relation $X = \sum_{i=1}^{N_h} \boldsymbol{X}_i \phi_i$ and
  define a norm on $\R^{N_h}$ by
  $\|\boldsymbol{X}\|_{\R^{N_h}} \coloneqq \|X\|_H$. Since the filtered
  probability space satisfies the usual conditions, $\F_0$ contains all
  $\P$-null sets. Hence, we can choose $\Omega' \in \F_{t_n}$ with
  $\P(\Omega') = 1$ such that the Wiener process $W$ is $U$-valued and
  the random variables $X^0, \ldots, X^{n-1}$ are $V_h$-valued on $\Omega'$.
  
  For any $\omega \in \Omega'$ and $\boldsymbol{X} \in \R^{N_h}$ with
  associated $X \in V_h$ we define the function
  $f \colon \Omega \times \R^{N_h} \rightarrow \R^{N_h}$ componentwise for
  $i=1,\ldots,{N_h}$ by
  \begin{align*}
    &f(\omega,\boldsymbol{X})_i \coloneqq \big( 3 X- 4X^{n-1}(\omega)
    + X^{n-2}(\omega),\phi_i \big)_H
    + 2 k \langle A(X,\omega), \phi_i \rangle_{V^* \times V}\\
    &\qquad - \big( 3 B(X^{n-1}(\omega),\omega)\Delta_k W^n(\omega)
    - B(X^{n-2}(\omega),\omega)\Delta_k W^{n-1}(\omega),
    \phi_i \big)_H.
  \end{align*}
  Notice that $\boldsymbol{X}$ fulfills the equation
  $f(\omega,\boldsymbol{X}) = 0$ if and only if $X^n(\omega) \coloneqq X=
  \sum_{i=1}^{N_h} \boldsymbol{X}_i \phi_i$ 
  solves the equation \eqref{BDF2 scheme} for given $\omega \in \Omega'$.
  
  In the following, let $\omega \in \Omega'$ be arbitrary, but fixed. 
  To prove the existence of a zero of the function $f(\omega,\cdot)$,
  we will show that the mapping
  $f(\omega,\cdot) \colon \R^{N_h} \to \R^{N_h}$ is continuous and satisfies
  $f(\omega,\boldsymbol{X}) \cdot \boldsymbol{X} \geq 0$ for some
  $R \in (0,\infty)$ and all $\boldsymbol{X} \in \R^{N_h}$ with
  $\|\boldsymbol{X}\|_2 = R$. The hemicontinuity of $A$ and the monotonicitiy
  condition \eqref{con:mono AB} imply the demicontinuity of the operator
  $A(\cdot,\omega)$. Since weak and strong convergence are equivalent in
  finite-dimensional spaces, the function $f(\omega,\cdot)$ is continuous.
  Moreover, we observe that
  \begin{equation*}
    \begin{aligned}
      &f(\omega,\boldsymbol{X}) \cdot \boldsymbol{X}
      = \big( 3 X- 4X^{n-1}(\omega) + X^{n-2}(\omega), X \big)_H
      + 2 k \langle A(X,\omega), X \rangle_{V^* \times V}\\
      &\qquad - \big( 3 B(X^{n-1}(\omega),\omega)\Delta_k W^n(\omega)
      - B(X^{n-2}(\omega),\omega) \Delta_k W^{n-1}(\omega), X \big)_H
    \end{aligned}
  \end{equation*}
  holds for every $\boldsymbol{X} \in \R^{N_h}$. Applying the Cauchy--Schwarz
  inequality and the coercivity condition \eqref{con:coer AB} leads to
  \begin{equation*}
    \begin{aligned}
    &f(\omega,\boldsymbol{X}) \cdot \boldsymbol{X}
    \geq (3-k \kappa) \|X\|_H^2
    - \|4X^{n-1}(\omega)-X^{n-2}(\omega)\|_H \|X\|_H\\
    &\qquad + \nu k \|B(X,\omega)\|_{\LB_2(U_0,H)}^2
    + \mu k \|X\|_V^p - c k\\
    &\qquad - \|3B(X^{n-1}(\omega),\omega)\Delta_k W^n(\omega)
    - B(X^{n-2}(\omega),\omega)\Delta_k W^{n-1}(\omega)\|_H \|X\|_H.
    \end{aligned}
  \end{equation*}
  Using the assumption $3-k \kappa \geq 0$ and the continuity of the embedding
  $V \hookrightarrow H$, we derive the estimate
  \begin{align*}
    &f(\omega,\boldsymbol{X}) \cdot \boldsymbol{X} \geq \|X\|_H \Big(
    \beta^p_{V \hookrightarrow H} \mu k \|X\|_H^{p-1}
	- \|4X^{n-1}(\omega)-X^{n-2}(\omega)\|_H\\
	&\qquad - \|3B(X^{n-1}(\omega),\omega)\Delta_k W^n(\omega)
	- B(X^{n-2}(\omega),\omega)\Delta_k W^{n-1}(\omega)\|_H \Big)
	- ck.
  \end{align*}
  Since $\|X\|_H = \|\boldsymbol{X}\|_{\R^{N_h}}$ and norms on the
  finite-dimensional space $\R^{N_h}$ are equivalent, there is some constant
  $C \in (0,\infty)$ such that
  \begin{align*}
  &f(\omega,\boldsymbol{X}) \cdot \boldsymbol{X}
  \geq C\|\boldsymbol{X}\|_2 \Big(
  \beta^p_{V \hookrightarrow H} \mu k \|\boldsymbol{X}\|_2^{p-1}
  - \|4X^{n-1}(\omega)-X^{n-2}(\omega)\|_H\\
  &\qquad - \|3B(X^{n-1}(\omega),\omega)\Delta_k W^n(\omega)
  - B(X^{n-2}(\omega),\omega)\Delta_k W^{n-1}(\omega)\|_H \Big)
  - ck.
  \end{align*}
  Now we choose $R(\omega) \in (0,\infty)$ sufficiently large such that
  $f(\omega,\boldsymbol{X}) \cdot \boldsymbol{X} \geq 0$ holds for all
  $\boldsymbol{X} \in \R^{N_h}$ with
  $\|\boldsymbol{X}\|_{2} = R(\omega)$. From Lemma~\ref{lem:zero}
  it then follows that a zero of the function $f(\omega,\cdot)$ exists.
  
  To prove the uniqueness of a zero of the function $f(\omega,\cdot)$ for fixed
  $\omega \in \Omega'$, assume that two distinct solutions
  $\boldsymbol{X},\boldsymbol{Y} \in \R^{N_h}$ with associated $X,Y \in V_h$,
  respectively, exist such that
  $f(\omega,\boldsymbol{X}) = f(\omega,\boldsymbol{X}) = 0$. The monotonicity
  condition \eqref{con:mono AB} and the condition $k \kappa \leq 3$ imply that
  \begin{align*}
    0 &= (f(\omega,\boldsymbol{X})-f(\omega,\boldsymbol{Y}),
    \boldsymbol{X}-\boldsymbol{Y})_2\\
    &= 3 \|X-Y\|_H^2 + k \langle A(X,\omega)-A(Y,\omega),
    X-Y \rangle_{V^* \times V}
    \geq (3-k\kappa) \|X-Y\|_H^2 \geq 0.
  \end{align*}
  This shows that $X$ and $Y$ coincide in $V_h$ and hence $\boldsymbol{X}$ and
  $\boldsymbol{Y}$ coincide in $\R^{N_h}$. Therefore, $f(\omega,\cdot)$ has a
  unique zero for every $\omega \in \Omega'$.
  
  Now, we set $X^n(\omega) \coloneqq X \in V_h$ for every $\omega \in \Omega'$
  and $X^n(\omega) \coloneqq 0$ for each $\omega \in \Omega \setminus \Omega'$.
  To prove the $\F_{t_n}$-measurability of $X^n$, recall that $X^j(\cdot)$ is
  assumed to be measurable with respect to $\F_{t_j} \subset \F_{t_n}$ for each
  $j=0,\ldots,n-1$. Moreover, Assumption~\ref{assump AB1} and the measurability
  properties of the Wiener process $W$ imply the $\F_{t_n}$-measurability of
  $A(v,\cdot)$ and $B(X^j(\cdot),\cdot)\Delta_k W^n(\cdot)$ for every
  $v \in V_h$ and $j =0,\ldots,n-1$. Therefore, the function
  $f(\cdot,\boldsymbol{X})$ is $\F_{t_n}$-measurable for every fixed
  $\boldsymbol{X} \in \R^{N_h}$. Since the $\sigma$-algebra $\F_{t_n}$
  contains all $\P$-null sets, we deduce from Lemma~\ref{lem:meas_zero} the
  measurability of the mapping $\omega \mapsto X^n (\omega)$ with respect to
  $\F_{t_n}$.
  
  This concludes the proof for the existence and uniqueness of a solution
  $(X^n)_{n=0}^{N_k}$ to the numerical scheme \eqref{BDF2 scheme}. The
  $L^p(\Omega;V) \cap L^2(\Omega;H)$-integrability of the discrete solution
  follows for sufficiently small temporal step size $k$ from
  Theorem~\ref{thm:a priori}.
\end{proof}

\begin{remark}
  \label{rmk:scheme BEM}
  Consider the BEM scheme \eqref{BEM scheme} with an initial value $X^0_{k,h}$
  satisfying
  \begin{equation*}
    X^0_{k,h} \in L^2(\Omega,\F_{t_0},\P;H) \quad \mathrm{ and } \quad
    B(X^0_{k,h}) \in L^2(\Omega, \F_{t_0}, \P; \LB_2(U_0,H))
  \end{equation*}
  such that $X^0_{k,h}$ is $\P$-almost surely $V_h$-valued. Under
  Assumption~\ref{assump AB1}, the BEM scheme admits for every temporal
  step size $k = \frac{T}{N_k}$ with $k\kappa<1$ a unique solution
  $(X_{k,h}^n)_{n=0}^{N_k}$, which is $(\F_{t_n})_{n=0}^{N_k}$-adapted,
  $\P$-almost surely $V_h$-valued and
  $L^p(\Omega;V) \cap L^2(\Omega;H)$-integrable. This result can be proven
  for $p \in (1,\infty)$ with similar techniques as used in the proof of
  Theorem~\ref{thm:scheme}. In the case of $p \in [2,\infty)$, an
  alternative proof can be found in \cite{gyoengy2005}.
\end{remark}

\begin{remark}
  \label{rmk:ini val BDF2}
  Let $h\in (0,1)$ and $V_h \neq \{0\}$ be fixed. The initialization of the
  BDF2-Maruyama scheme \eqref{BDF2 scheme} requires two initial values
  $(X^n_{k,h})_{n=0,1}$ which are $(\F_{t_n})_{n=0,1}$-adapted and
  $\P$-almost surely $V_h$-valued. A typical choice for the first initial value
  is $X_{k,h}^0 = P_h(X_0)$, where $P_h \colon H \to V_h$ denotes
  the orthogonal projector onto $V_h$ with respect to the inner product in $H$.
  In Section~\ref{sec:simu}, we also consider an interpolation operator
  as an alternative to $P_h$. 
  Further, one iteration of the BEM scheme \eqref{BEM scheme} with
  $X_{k,h}^0$ as the initial value yields an $\F_{t_1}$-measurable
  and $\P$-almost surely $V_h$-valued random variable $X^1_{k,h}$
  which is an admissible choice for the second initial value.
  Compare further with Remark~\ref{rmk:scheme BEM}.
\end{remark}

We close this section with a simple example of a stochastic partial
differential equation, which fits into the framework of
Assumption~\ref{assump AB1}. For further examples of stochastic evolution
equations we refer to \cite[Section~4.1]{prevot2007} and Section~\ref{sec:simu}
below.

\begin{example}
  \label{ex:SHE}
  We consider the stochastic heat equation
  \begin{equation}
  \label{prob:SHE}
    \begin{aligned}
      \diff u(t,x) - u_{xx}(t,x) \diff t &= \sigma \diff W(t,x),
      && (t,x) \in (0,T] \times (0,1),\\
      u(t,0) &= u(t,1) = 0, &&t \in (0,T],\\
      u(0,x) &= \sin(\pi x), &&x \in (0,1),
    \end{aligned}
  \end{equation}
  with additive noise determined by the scalar $\sigma \in \R$, Dirichlet
  boundary conditions and a smooth deterministic initial value.

  In the context of our abstract setting, we make use of the Gelfand triple
  induced by the spaces $V = H^1_0(0,1)$ and $H = L^2(0,1)$ and identify $X$ as
  the abstract function of $u$ such that
  \begin{equation*}
    X \colon [0,T] \times \Omega \to V,
    \quad (t,\omega) \mapsto u(t,\cdot,\omega).
  \end{equation*}
  The deterministic initial value $X_0 = \sin(\pi \cdot)$ is smooth and
  equal to zero on the boundary. Hence, we have $X_0 \in V$.
  The Wiener process $W$ is assumed to take values in $U =
  L^2(0,1)$ and its covariance operator $Q$ to have finite trace. If
  $\{\chi_j\}_{j \in N}$ is an orthonormal basis of $U$ consisting of
  eigenfunctions of $Q$ with eigenvalues $q_j \geq 0$, then
  $\{Q^{\frac{1}{2}}\chi_j\}_{j \in N}$ is an orthonormal basis of
  $U_0 = Q^{\frac{1}{2}}(U) \subset H$ and it follows
  \begin{equation*}
    \|\sigma \, \id_H\|_{\LB_2(U_0,H)}^2
    = \sigma^2  \sum_{j \in \N} \|Q^\frac{1}{2}\chi_j\|_H^2
    = \sigma^2 \sum_{j \in \N} q_j
    = \sigma^2 \Tr(Q) < \infty.
  \end{equation*}
  In addition, the operators
  \begin{align*}
    &A \colon V \to V^\ast, \quad v \mapsto A(v) \\
    &B \colon V \to \LB_2(U_0,H), \quad v \mapsto \sigma \, \id_H
  \end{align*}
  are well-defined, where $A(v) \in V^\ast = H^{-1}(0,1)$ is the linear
  functional given by
  \begin{align*}
    \langle A(v),w \rangle_{V^* \times V} = \int_0^1 v'(x) w'(x) \diff{x}
    = (v,w)_{V}
  \end{align*}
  for all $v,w \in V = H^1_0(\Omega)$. Altogether, this allows us to 
  reformulate problem \eqref{prob:SHE} as a stochastic evolution equation of
  the form \eqref{sde_problem}.  

  The operators $A$ and $B$ are both deterministic and $\B(V)$-measurable.
  In particular, the linear operator $A \colon V \to V^\ast$ is
  hemi-continuous, bounded and, hence, of linear growth.
  The monotonicity condition \eqref{con:mono AB} and the coercivity condition
  \eqref{con:coer AB} are satisfied with $\kappa=0$, $\mu=1$, $p = 2$, and $c
  =\nu \sigma^2 \Tr(Q)$ for any $\nu \in [1,\infty)$. 

  For fixed $h>0$ and some finite-dimensional subspace
  $\{0\} \neq V_h \subset V$, we generate the initial values for the
  BDF2-Maruyama scheme as discussed in Remark~\ref{rmk:ini val BDF2}. Further,
  the initial values $(X^n_{k,h})_{n=0,1}$ are $L^2(\Omega;H)$-integrable by
  construction. Since the operator $B$ is constant, the terms
  $B(X^n_{k,h})$, $n \in \{0,1\}$, also fulfill the
  integrability condition in Assumption~\ref{assump IC}.
  
  Consequently, Assumption~\ref{assump AB1} and Assumption~\ref{assump IC} are
  satisfied and Theorem~\ref{thm:scheme} guarantees for every sufficiently
  small step size $k$ that the BDF2-Maruyama scheme is well-defined for the
  given problem \eqref{prob:SHE}.
\end{example}

\section{Convergence of the BDF2-Maruyama method}
\label{sec:converg}

In this section, we derive an estimate for the strong error between 
the approximate solution $(X^n_{k,h})_{n=0}^{N_k}$ of \eqref{BDF2 scheme} and
the exact solution $X$ of \eqref{sde_problem}. In order to determine a lower
bound for the order of convergence, we have to impose additional conditions on
the operators $A$ and $B$ for the error analysis.

Throughout this section, we fix $p=2$ and a Gelfand triple $(V,H,V^\ast)$ with
$V$ being uniformly convex as discussed in Section~\ref{sec:prelimi}.

\begin{assumption}
  \label{assump AB2}
  Let the operators $A$ and $B$ satisfy Assumption~\ref{assump AB1} for $p=2$.
  Moreover, there are $\kappa \in [0,\infty)$, $\nu \in (1,\infty)$ and
  $L, K \in (0,\infty)$ such that the operators $A$ and $B$ satisfy
  $\P$-almost surely on $\Omega$ for all $v,u \in V$ the monotonicity condition
  \begin{equation}
    \label{con:str mono AB}
    \begin{aligned}
      &2 \langle A(u)-A(v),u-v \rangle_{V^* \times V} + \kappa \|u-v\|_H^2\\
      &\qquad \geq \nu \|B(u)-B(v)\|_{\LB_2(U_0,H)}^2 + K \|u-v\|_V^2
    \end{aligned}
  \end{equation}
  and the Lipschitz condition
  \begin{equation}
    \label{con:Lip A}
    \|A(v)-A(u)\|_{V^*} \leq L \|v-u\|_V.
  \end{equation}
\end{assumption}

In order to determine an order of convergence, we require the consistency
of the initial values for the numerical method \eqref{BDF2 scheme}.

\begin{assumption}
  \label{assump IC2}
  Let the initial values $(X^n_{k,h})_{n=0,1}$ satisfy
  Assumption~\ref{assump IC}. In addition, there exist $C_I \in (0,\infty)$ and
  $\gamma \in (0,\infty)$ such that
  \begin{equation*}
    \sum_{n = 0}^1 \E \big[\|X^n_{k,h} - X(t_n)\|_H^2
    + k \|B(X^n_{k,h})-B(X(t_n))\|^2_{\LB_2(U_0,H)}\big]
    \le C_I ( k + h^\gamma)
  \end{equation*}
  holds for all $k = \frac{T}{N_k}$, $N_k \in \N$, and $h \in (0,1)$.
\end{assumption}

We also need to impose the following additional temporal regularity condition
on the exact solution. To this end, we recall the definition of the
$\p$-variation norm from \eqref{def:p_var}.

\begin{assumption}
  \label{assump X}  
  Let the initial value $X_0$ be $V$-valued $\P$-almost surely and let the
  solution to \eqref{sde_problem} satisfy
  $X \in C([0,T]; L^2(\Omega;V))$. In addition, there exists
  $\p \in [2,\infty)$ with $\|X\|_{\p-\mathrm{var},L^2(\Omega;V)} < \infty$,
  i.e. $X$ is of finite $\p$-variation with respect to the
  $L^2(\Omega;V)$-norm.
\end{assumption}

Evidently, if the exact solution $X \in C([0,T]; L^2(\Omega;V))$ is H\"older
continuous with exponent $\gamma = \frac{1}{q} \in (0,\frac{1}{2})$ then 
Assumption~\ref{assump X} is satisfied. The following two lemmas show how the
$\p$-variation norm is applied in the error analysis.  

\begin{lemma}
  \label{lem:var_norm}
  Let $X \in C([0,T];L^2(\Omega;V))$ be a stochastic process of finite
  $\p$-varia\-tion with respect to the $L^2(\Omega;V)$-norm for some $\p \in
  [2,\infty)$. Then it holds for every finite partition $\mathcal{P}
  = \{t_0=0,\ldots,t_N=T\}$,
  $N \in \N$, of the interval $[0,T]$ with maximal step size
  $k \coloneqq \max_{n=1,\ldots,N} (t_n-t_{n-1})$ that
  \begin{equation*}
    \sum_{n=1}^N \int_{t_{n-1}}^{t_n} \E \big[ \|X(s)-X(t_n)\|^2_V \big] \diff s
    \leq (1+T) k^{\frac{2}{\p}} \|X\|^2_{\p-\mathrm{var},L^2(\Omega;V)}.
  \end{equation*}
\end{lemma}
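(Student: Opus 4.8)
The goal is to bound $\sum_{n=1}^N \int_{t_{n-1}}^{t_n} \E[\|X(s)-X(t_n)\|_V^2]\diff s$ by $(1+T)k^{2/\p}\|X\|_{\p\text{-var},L^2(\Omega;V)}^2$. The plan is to work on each subinterval $[t_{n-1},t_n]$ separately and exploit the definition of the $\p$-variation norm in \eqref{def:p_var}. First I would observe that for $s \in [t_{n-1},t_n]$, the pair $\{s,t_n\}$ together with the other partition points forms an admissible (finite) partition of $[0,T]$, so the single difference $\E[\|X(s)-X(t_n)\|_V^2] = \|X(s)-X(t_n)\|_{L^2(\Omega;V)}^2$ is dominated by the supremum in \eqref{def:p_var}. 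Concretely, one has $\|X(s)-X(t_n)\|_{L^2(\Omega;V)}^\p \le \|X\|_{\p\text{-var},L^2(\Omega;V)}^\p$, hence $\|X(s)-X(t_n)\|_{L^2(\Omega;V)}^2 \le \|X\|_{\p\text{-var},L^2(\Omega;V)}^2$ since the $\p$-variation norm controls every individual increment.

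Let me think about where the factor $k^{2/\p}$ and the sum over $n$ come from, since the naive bound above only yields the full variation norm on each interval. The sharper route is to keep the sum intact rather than bounding each increment by the global supremum. For a fixed partition, write $a_n \coloneqq \|X(s)-X(t_n)\|_{L^2(\Omega;V)}$ and interpolate: since $\p \ge 2$, I would use $a_n^2 = (a_n^\p)^{2/\p} \cdot 1^{1-2/\p}$ and apply a summation argument. The cleanest path is to bound $\E[\|X(s)-X(t_n)\|_V^2] \le \|X\|_{\p\text{-var},L^2(\Omega;V)}^2$ pointwise in $s$, integrate over $[t_{n-1},t_n]$ to pick up a factor $(t_n-t_{n-1}) \le k$, and then note there are $N \le T/k + 1$ intervals. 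This needs to be reconciled with the target exponent $k^{2/\p}$: since $\p \ge 2$ gives $2/\p \le 1$, and after summing one expects $N \cdot k \approx T$, the power $k^{2/\p}$ should emerge from splitting $k = k^{2/\p}\cdot k^{1-2/\p}$ and absorbing the remaining factor. I would therefore combine the pointwise increment bound with Hölder's inequality applied to the sum over $n$, using exponents $\p/2$ and its conjugate.

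The actual mechanism I expect to work is: bound each integrand by the $\p$-th power of the increment times a correction, integrate, and sum. Precisely, for $s\in[t_{n-1},t_n]$ I have $\|X(s)-X(t_n)\|_{L^2(\Omega;V)}^2 \le k^{2/\p-1}(t_n-t_{n-1})^{1-2/\p}\|X(s)-X(t_n)\|_{L^2(\Omega;V)}^2$ trivially when $(t_n-t_{n-1})\le k$; more usefully, apply Jensen/Hölder to $\int_{t_{n-1}}^{t_n}\E[\|X(s)-X(t_n)\|_V^2]\diff s$ treating it as a product against the constant $1$, raising to power $\p/2$ via Hölder to convert the squared increment into the $\p$-th power that the variation norm controls. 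Summing the resulting $\p$-th powers over $n$ stays below $\|X\|_{\p\text{-var},L^2(\Omega;V)}^\p$ by the very definition of $\p$-variation (the partition points $t_n$ form an admissible partition), and the leftover Lebesgue factors collapse to $(1+T)k^{2/\p}$ after a final Hölder step over $n$ with exponents $\p/2$ and $\p/(\p-2)$.

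The main obstacle is bookkeeping the exponents correctly so that the squared $V$-norm inside the integral is converted to the $\p$-th power (matching \eqref{def:p_var}) while the time-measure factors assemble into exactly $(1+T)k^{2/\p}$; the interpolation inequality $\|\cdot\|_2 \le \|\cdot\|_\p$-type passage from exponent $2$ to exponent $\p$ and the corresponding Hölder conjugate $\p/(\p-2)$ must be tracked, and the $+T$ term in the prefactor comes from bounding the interval count $N$ by $T/k$ plus the endpoint contribution. Once the exponents are aligned, the estimate follows directly from \eqref{def:p_var} with no probabilistic input beyond the fact that $\E[\|\cdot\|_V^2]=\|\cdot\|_{L^2(\Omega;V)}^2$.
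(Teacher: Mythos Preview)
Your plan has the right ingredients (H\"older with exponents $\p/2$ and $\p/(\p-2)$, the definition \eqref{def:p_var}), but there is a genuine gap in the mechanism you describe. After you apply H\"older inside each integral to pass from the square to the $\p$-th power, you are left with
\[
  \sum_{n=1}^N \int_{t_{n-1}}^{t_n} \|X(s)-X(t_n)\|_{L^2(\Omega;V)}^{\p}\diff s,
\]
and this is \emph{not} bounded by $k\,\|X\|_{\p\text{-var},L^2(\Omega;V)}^{\p}$ in the way you claim: for each fixed $s$ the single increment is dominated by the variation norm, but integrating and summing then produces the factor $\sum_n (t_n-t_{n-1})=T$ rather than $k$, so the $k^{2/\p}$ you need never appears. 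The $\p$-variation norm controls a sum of $\p$-th powers of increments taken over a \emph{single} partition---one increment per subinterval---not a continuum of increments integrated in $s$.

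The paper closes this gap with a step you are missing: by continuity of $s\mapsto \|X(s)-X(t_n)\|_{L^2(\Omega;V)}^2$ and the mean value theorem for integrals, there exists $\xi_n\in[t_{n-1},t_n]$ with
\[
  \int_{t_{n-1}}^{t_n}\|X(s)-X(t_n)\|_{L^2(\Omega;V)}^2\diff s
  =(t_n-t_{n-1})\,\|X(\xi_n)-X(t_n)\|_{L^2(\Omega;V)}^2.
\]
Now one has exactly one increment per subinterval, and the points $\{t_0,\xi_1,t_1,\xi_2,\ldots,\xi_N,t_N\}$ form an admissible partition for \eqref{def:p_var}. Splitting $(t_n-t_{n-1})=(t_n-t_{n-1})^{(\p-2)/\p}(t_n-t_{n-1})^{2/\p}$ and applying H\"older over $n$ with the exponents you identified then yields $T^{(\p-2)/\p}\,k^{2/\p}\,\|X\|_{\p\text{-var}}^2$. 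Note also that your explanation of the prefactor is off: the $(1+T)$ comes from the crude bound $T^{(\p-2)/\p}\le 1+T$, not from counting intervals.
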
 

\begin{proof}
  The assumption $X \in C([0,T];L^2(\Omega;V))$ implies that the real-valued
  function $s \mapsto \E \big[ \|X(s)-X(t_n)\|_V^2 \big]$ is continuous on the
  interval $[0,T]$. By the intermediate value theorem, there exist for each
  $n \in \{1,\ldots,N\}$ a point $\xi_n \in [t_{n-1},t_n]$ independent of
  $\Omega$ such that
  \begin{equation*}
    \int_{t_{n-1}}^{t_n} \E \big[ \|X(s)-X(t_n)\|^2_V \big] \diff s
    = (t_n-t_{n-1}) \|X(\xi_n)-X(t_n)\|^2_{L^2(\Omega;V)}.
  \end{equation*}
  A summation over $n$ from $1$ to $N$ shows that
  \begin{equation*}
    \sum_{n=1}^N \int_{t_{n-1}}^{t_n}
    \E \big[ \|X(s)-X(t_n)\|^2_V \big] \diff s
    = \sum_{n=1}^N (t_n-t_{n-1})^{\frac{\p-2}{\p}+\frac{2}{\p}}
    \|X(\xi_n)-X(t_n)\|^2_{L^2(\Omega;V)}.
  \end{equation*}
  Finally, applying H\"older's inequality with exponents
  $\rho = \frac{\p}{\p-2}$ and $\rho' = \frac{\p}{2}$ yields
  \begin{align*}
    &\sum_{n=1}^N \int_{t_{n-1}}^{t_n} \E \big[ \|X(s)-X(t_n)\|^2_V \big]
    \diff s\\
    &\quad \leq \Big( \sum_{n=1}^N (t_n-t_{n-1}) \Big)^{\frac{\p-2}{\p}}
    \Big( \sum_{n=1}^N (t_n-t_{n-1}) \|X(\xi_n)-X(t_n)\|^\p_{L^2(\Omega;V)}
    \Big)^{\frac{2}{\p}}\\
    &\quad \leq (1+T) k^{\frac{2}{\p}}
    \|X\|^2_{\p-\mathrm{var},L^2(\Omega;V)},
  \end{align*}
  where we use $T^{\frac{q-2}{q}} \leq(1 + T)$ for $T \in (0,\infty)$ and
  recall the definition of the $\p$-variational norm from \eqref{def:p_var}
  in the last step.
\end{proof}

\begin{lemma}
  \label{lem:est_int_A}
  Let Assumption~\ref{assump AB2} and Assumption~\ref{assump X} be satisfied
  with $\p \in [2,\infty)$. Then, it holds for every finite partition
  $\mathcal{P} = \{t_0=0,\ldots,t_N=T\}$, $N \in \N$, of the interval $[0,T]$
  with maximal step size $k \coloneqq \max_{n=1,\ldots,N} (t_n-t_{n-1})$ that
  \begin{equation*}
    \sum_{n=1}^N \int_{t_{n-1}}^{t_n}
    \E \big[ \|A(X(s))-A(X(t_n))\|^2_{V^*} \big] \diff s
    \leq L_A k^{\frac{2}{\p}}
    \|X\|^2_{\p-\mathrm{var},L^2(\Omega;V)}
  \end{equation*}
  with constant $L_A = (1+T) L^2$.
\end{lemma}

\begin{proof}
  The Lipschitz condition \eqref{con:Lip A} yields for every
  $n \in \{1,\ldots,N\}$ and $s \in [t_{n-1},t_n]$ that
  \begin{equation*}
	\E \big[ \|A(X(s))-A(X(t_n))\|^2_{V^*} \big]
	\leq L^2 \, \E \big[ \|X(s)-X(t_n)\|_V^2 \big].
  \end{equation*}
  The assertion follows now from an application of Lemma~\ref{lem:var_norm}.
\end{proof}

\begin{lemma}
  \label{lem:est_stoch_int}
  Let Assumption~\ref{assump AB2} and Assumption~\ref{assump X} with $\p \in
  [2,\infty)$ be satisfied. Then, it holds
  for every finite partition $\mathcal{P} = \{t_0=0,\ldots,t_N=T\}$,
  $N \in \N$, of the interval $[0,T]$ with maximal step size
  $k \coloneqq \max_{n=1,\ldots,N} (t_n-t_{n-1})$ that
  \begin{align*}
    \sum_{n = 1}^{N} \E \Big[ \Big\|
    \int_{t_{n-1}}^{t_n} B(X(s)) - B(X(t_{n-1})) \diff W(s)
    \Big\|_H^2 \Big]
    \le L_B k^{\frac{2}{\p}} \|X\|^2_{\p-\mathrm{var},L^2(\Omega;V)}
  \end{align*}
  with constant $L_B = (1+T)(\kappa \, \beta_{V \hookrightarrow H}^2 + 2L - K )
  \ge 0$.
\end{lemma}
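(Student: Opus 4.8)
The plan is to reduce the assertion to Lemma~\ref{lem:var_norm} by two analytic moves: an exact evaluation of each stochastic integral via the It\=o isometry, followed by a pointwise estimate that converts an increment of $B$ along the trajectory of $X$ into an increment of $X$ measured in the $V$-norm. Concretely, I would fix $n \in \{1,\ldots,N\}$ and apply the It\=o isometry to the $\LB_2(U_0,H)$-valued integrand $s \mapsto B(X(s))-B(X(t_{n-1}))$, which is adapted and square-integrable on $[t_{n-1},t_n]\times\Omega$ by Assumption~\ref{assump X} together with the measurability and growth properties of $B$ from Assumption~\ref{assump AB1}. This produces
\begin{equation*}
  \E \Big[ \Big\| \int_{t_{n-1}}^{t_n} B(X(s))-B(X(t_{n-1})) \diff{W(s)} \Big\|_H^2 \Big]
  = \int_{t_{n-1}}^{t_n} \E \big[ \|B(X(s))-B(X(t_{n-1}))\|^2_{\LB_2(U_0,H)} \big] \diff s,
\end{equation*}
so that, after summation over $n$, the left-hand side of the claim equals $\sum_{n=1}^{N} \int_{t_{n-1}}^{t_n} \E [ \|B(X(s))-B(X(t_{n-1}))\|^2_{\LB_2(U_0,H)} ] \diff s$.

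The crucial pointwise step is to bound $\|B(u)-B(v)\|^2_{\LB_2(U_0,H)}$ by a constant multiple of $\|u-v\|_V^2$, which is delicate because no Lipschitz condition on $B$ is assumed directly. Starting from the monotonicity condition \eqref{con:str mono AB}, I would estimate the dual pairing by Cauchy--Schwarz and the Lipschitz condition \eqref{con:Lip A} via $2\langle A(u)-A(v),u-v\rangle_{V^*\times V} \le 2L\|u-v\|_V^2$, control $\kappa\|u-v\|_H^2 \le \kappa\beta_{V\hookrightarrow H}^2\|u-v\|_V^2$ through the embedding $V \hookrightarrow H$, and retain the term $-K\|u-v\|_V^2$. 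Since $\nu>1$, this yields
\begin{equation*}
  \|B(u)-B(v)\|^2_{\LB_2(U_0,H)}
  \le \tfrac{1}{\nu}\big(\kappa\beta_{V\hookrightarrow H}^2 + 2L - K\big)\|u-v\|_V^2
  \le \big(\kappa\beta_{V\hookrightarrow H}^2 + 2L - K\big)\|u-v\|_V^2
\end{equation*}
$\P$-almost surely for all $u,v \in V$. Moreover, the nonnegativity of the left-hand side forces $\kappa\beta_{V\hookrightarrow H}^2 + 2L - K \ge 0$ (choosing $u \neq v$), which is precisely the asserted nonnegativity of $L_B$.

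Inserting this estimate with $u=X(s)$ and $v=X(t_{n-1})$ and taking expectations reduces the whole claim to the bound $\sum_{n=1}^{N}\int_{t_{n-1}}^{t_n}\E[\|X(s)-X(t_{n-1})\|_V^2]\diff s \le (1+T)k^{\frac{2}{\p}}\|X\|^2_{\p-\mathrm{var},L^2(\Omega;V)}$. This is the left-endpoint counterpart of Lemma~\ref{lem:var_norm}, and it follows by the same argument used there: the intermediate value theorem supplies points $\xi_n \in [t_{n-1},t_n]$, the increments $X(\xi_n)-X(t_{n-1})$ are increments of the refined partition $\{t_0,\xi_1,t_1,\xi_2,\ldots,t_{N}\}$ of $[0,T]$ and hence have summed $\p$-th powers dominated by $\|X\|^\p_{\p-\mathrm{var},L^2(\Omega;V)}$, and a H\"older estimate with exponents $\frac{\p}{\p-2}$ and $\frac{\p}{2}$ closes the estimate. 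Multiplying by the constant $(\kappa\beta_{V\hookrightarrow H}^2+2L-K)$ from the pointwise bound produces exactly $L_B=(1+T)(\kappa\beta_{V\hookrightarrow H}^2+2L-K)$.

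I expect the main obstacle to be this pointwise step: although $B$ carries no standalone Lipschitz hypothesis, the strong monotonicity \eqref{con:str mono AB} paired with the Lipschitz bound \eqref{con:Lip A} on $A$ secretly forces a $V$-norm Lipschitz estimate on $B$, and the care lies in extracting the sharp constant together with its correct sign from this interplay. Everything else---the It\=o isometry and the $\p$-variation bookkeeping---is routine once this estimate is in hand.
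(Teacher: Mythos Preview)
Your proposal is correct and follows essentially the same route as the paper's proof: It\=o isometry, then the pointwise Lipschitz-type bound on $B$ extracted from \eqref{con:str mono AB} combined with \eqref{con:Lip A} and the embedding $V\hookrightarrow H$, and finally the left-endpoint variant of Lemma~\ref{lem:var_norm}. The only cosmetic difference is the order of the first two steps; your explicit treatment of the $\tfrac{1}{\nu}$ factor and of the refined-partition argument for the left-endpoint version is slightly more detailed than the paper's, which simply invokes ``a slightly modified version of Lemma~\ref{lem:var_norm}''.
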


\begin{proof}
  First, observe that the monotonicity-like condition \eqref{con:str mono AB}
  with $\nu \in (1,\infty)$ and the Lipschitz continuity of $A$ imply for every
  $u,v \in V$ that  
  \begin{align*}
    \|B(u)-B(v)\|_{\LB_2(U_0,H)}^2
    & \leq  2 \|A(u)-A(v)\|_{V^*} \|u-v\|_V
    + \kappa \|u-v\|_H^2- K \|u-v\|_V^2 \\ 
    &\leq \kappa \|u-v\|_H^2 + (2L - K) \|u-v\|_V^2\\
    &\leq (\kappa \, \beta_{V \hookrightarrow H}^2 + 2L - K )
    \|u-v\|_V^2\\
    &= \frac{L_B}{1+T} \|u-v\|_V^2 \quad \P\text{-a.s.,}
  \end{align*}
  where we also used that $V$ is continuously embedded into $H$, i.e.
  for all $v \in V$ it holds $\|v\|_H \leq \beta_{V \hookrightarrow H}
  \|v\|_V$. In particular, it also follows from the above estimate that $L_B
  \ge 0$.
 
  Next, for every $n \in \{1,\ldots,N\}$ an application
  of the It\=o isometry yields
  \begin{align*}
    &\E \bigg[ \Big\| \int_{t_{n-1}}^{t_n} B(X(s)) - B(X(t_{n-1})) \diff W(s)
    \Big\|_H^2\bigg]\\
    &\quad = \E \Big[\int_{t_{n-1}}^{t_n}
    \big\| B(X(s)) - B(X(t_{n-1})) \big\|_{\LB_2(U_0;H)}^2 \diff s\Big]\\
    &\quad \leq \frac{L_B}{1+T}
    \int_{t_{n-1}}^{t_n} \E \big[ \| X(s) - X(t_{n-1}) \|_V^2 \big] \diff{s}.
  \end{align*}
  Then, the assertion follows from an application of a slightly modified
  version of Lemma~\ref{lem:var_norm}.
\end{proof}

We are now prepared to state the main result of this section.

\begin{theorem}
  \label{thm:err}
  Let Assumption~\ref{assump AB2} and Assumption~\ref{assump X} be satisfied
  with $\p \in [2,\infty)$. Let $h \in (0,1)$ and $k = \frac{T}{N_k}$,
  $N_k \in \N$, be fixed with $2\kappa k < 1$. Further, let some initial values
  $(X^n_{k,h})_{n=0,1}$ satisfy Assumption~\ref{assump IC2} for
  $\gamma \in (0,\infty)$. Then, it holds the error estimate
  \begin{align*}  
    &\max_{n\in\{2,\ldots,N_k\}} \|X^n_{k,h}-X(t_n)\|_{L^2(\Omega;H)}^2
    + k \sum_{n=2}^{N_k} \|X^n_{k,h}-X(t_n)\|_{L^2(\Omega;V)}^2\\
    &\quad \leq C_k \Big( k^{\frac{2}{\p}}
    \|X\|^2_{\p-\mathrm{var},L^2(\Omega;V)}
    + k + h^\gamma
    + \max_{n\in\{2,\ldots,N_k\}} \E\big[\mathrm{dist}_H(X(t_n),V_h)^2\big]\\
    &\qquad + k \big(1+\|P_h\|_{\LB(V)}\big)^2
    \sum_{n=2}^{N_k} \E\big[\mathrm{dist}_V(X(t_n),V_h)^2\big] \Big),
  \end{align*}
  where the constant $C_k$ is defined for
  $\tilde{C}_k = \min\{1-2k\kappa,K\}$ by
  \begin{align*}
    C_k = 2\ee^{2T\kappa \tilde{C}_k^{-1}} \tilde{C}_k^{-1}
    \max\Big\{ 1, 8\frac{L^2}{K}+K, 16 C_I, 2\nu C_I,
    32\frac{L_A}{K} + \frac{2\nu L_B}{\nu-1} \Big\}.
  \end{align*}
\end{theorem}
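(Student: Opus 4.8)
\emph{Setup and Galerkin splitting.} The plan is to mimic the energy argument behind the a priori estimate (Theorem~\ref{thm:a priori}), now applied to the error, with two extra ingredients: a Galerkin splitting that exploits the $H$-orthogonality of $P_h$, and a consistency analysis that reduces everything to the $\p$-variation Lemmas~\ref{lem:est_int_A} and \ref{lem:est_stoch_int}. First I would write the error as $e^n = X^n_{k,h} - X(t_n) = \theta^n + \rho^n$ with $\theta^n := X^n_{k,h} - P_h X(t_n) \in V_h$ and $\rho^n := (P_h - \id)X(t_n)$. The key point is that $\rho^j$ is $H$-orthogonal to $V_h$, so $(\rho^j, v)_H = 0$ for every $v \in V_h$; this lets me test the error equation against $4\theta^n$ and retain only $\theta$-terms in the $H$-inner product, to which the identity \eqref{eqn:id 2-step} applies verbatim with $x_1,x_2,x_3 = \theta^{n-2},\theta^{n-1},\theta^n$. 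The $\rho$-contributions then survive only in the drift pairing and when converting $\theta$-bounds back to $e$-bounds, producing precisely the $\mathrm{dist}_H$ and $\mathrm{dist}_V$ terms.

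\emph{Consistency and the merged noise term.} Next I would insert the exact solution into the scheme. Using the integral identity \eqref{sde solution} on $[t_{n-1},t_n]$ and $[t_{n-2},t_{n-1}]$ to rewrite $\tfrac32 X(t_n) - 2X(t_{n-1}) + \tfrac12 X(t_{n-2})$, and adding and subtracting $kA(X(t_n))$ respectively the frozen coefficients $B(X(t_{n-1})),B(X(t_{n-2}))$, the exact solution satisfies \eqref{BDF2 scheme} up to a drift defect $D_A^n := \tfrac32\int_{t_{n-1}}^{t_n}(A(X(s))-A(X(t_n)))\diff s - \tfrac12\int_{t_{n-2}}^{t_{n-1}}(A(X(s))-A(X(t_n)))\diff s$ and a stochastic defect. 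Subtracting from \eqref{BDF2 scheme}, the crucial simplification is that the scheme's noise term combines with the stochastic defect into $S^n = \tfrac32 I^n - \tfrac12 I^{n-1}$, where $I^m := \int_{t_{m-1}}^{t_m}(B(X^{m-1}_{k,h}) - B(X(s)))\diff W(s)$. Since each $I^m$ is a martingale increment over $[t_{m-1},t_m]$, i.e. $\E[I^m \mid \F_{t_{m-1}}] = 0$, the quantity $S^n$ has exactly the algebraic structure of the term $\tfrac32 B(X^{n-1})\Delta_k W^n - \tfrac12 B(X^{n-2})\Delta_k W^{n-1}$ treated in Theorem~\ref{thm:a priori}.

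\emph{Energy estimate term by term.} Testing the error equation with $4\theta^n$, I apply \eqref{eqn:id 2-step} to the telescoping $H$-part and treat the remaining terms as in the a priori proof. For the drift I split $\langle A(X^n)-A(X(t_n)),\theta^n\rangle = \langle A(X^n)-A(X(t_n)),e^n\rangle - \langle A(X^n)-A(X(t_n)),\rho^n\rangle$; the first contribution is bounded below by the monotonicity condition \eqref{con:str mono AB}, furnishing the dissipation $2k(K\|e^n\|_V^2 + \nu\|B(X^n)-B(X(t_n))\|_{\LB_2(U_0,H)}^2)$ against the Gronwall term $2k\kappa\|e^n\|_H^2$, while the second is controlled by \eqref{con:Lip A} together with $\|\rho^n\|_V \le (1+\|P_h\|_{\LB(V)})\,\mathrm{dist}_V(X(t_n),V_h)$ (which follows since $P_h$ acts as the identity on $V_h$), absorbing a fraction of $K\|e^n\|_V^2$ and leaving the $(1+\|P_h\|_{\LB(V)})^2$ term with a constant of order $L^2/K$. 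The noise contribution $4(S^n,\theta^n)_H$ is decomposed exactly as in \eqref{eq:ap_2}: the martingale part vanishes in expectation, the telescoping parts leave only boundary contributions at $n \in \{2,j\}$, and Young's inequality with weight $\nu$ produces a $\tfrac1\nu$-multiple of the diamond $\|\theta^n-2\theta^{n-1}+\theta^{n-2}\|_H^2$ that merges with the diamond from \eqref{eqn:id 2-step} to leave the coefficient $\tfrac{\nu-1}{\nu}$, plus It\=o-isometry terms. By the triangle inequality $\E\|I^m\|_H^2 \le 2k\,\E\|B(X^{m-1})-B(X(t_{m-1}))\|^2 + 2\,\E\!\int_{t_{m-1}}^{t_m}\|B(X(t_{m-1}))-B(X(s))\|^2\diff s$, whose first summand cancels against the $\nu$-dissipation up to boundary terms (as in Theorem~\ref{thm:a priori}) and whose second summand is the consistency contribution controlled by Lemma~\ref{lem:est_stoch_int}. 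The defect $4\langle D_A^n,\theta^n\rangle$ is handled by Young's inequality with a weight of order $kK$ and Lemma~\ref{lem:est_int_A}, yielding the $\tfrac{L_A}{K}k^{2/\p}\|X\|^2_{\p-\mathrm{var},L^2(\Omega;V)}$ term.

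\emph{Closing and main obstacle.} Finally I would sum over $n$, bound the boundary terms at $n \in \{0,1\}$ by $C_I(k+h^\gamma)$ via Assumption~\ref{assump IC2}, convert the $\theta$-bounds into $e$-bounds using $\|e^n\|_H^2 \le 2\|\theta^n\|_H^2 + 2\,\mathrm{dist}_H(X(t_n),V_h)^2$, and apply the discrete Gronwall inequality after dividing by $\tilde C_k = \min\{1-2k\kappa,K\}$, which produces the factor $\ee^{2T\kappa\tilde C_k^{-1}}$ and the stated constant $C_k$. I expect the main obstacle to be the bookkeeping of the stochastic cross-terms: verifying that merging the scheme's noise term with the stochastic consistency defect into the single martingale-increment object $S^n$ is legitimate, so that the martingale cancellations and the It\=o-isometry ``almost cancellation'' against the $\nu$-dissipation carry over unchanged from Theorem~\ref{thm:a priori}, while simultaneously ensuring that every Young split leaves a coercive remainder. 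In particular, the $V$-dissipation $K\|e^n\|_V^2$ must dominate both the $\theta$-part of the drift defect and the $\rho$-drift term, and the $\tfrac{\nu-1}{\nu}$ diamond must dominate the noise diamond, so that only Gronwall-able $\|\theta^n\|_H^2$ contributions and genuine consistency and projection errors remain on the right-hand side.
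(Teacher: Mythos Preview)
Your overall strategy---Galerkin splitting, the identity \eqref{eqn:id 2-step}, monotonicity \eqref{con:str mono AB} for the drift, and Lemmas~\ref{lem:est_int_A}--\ref{lem:est_stoch_int} for consistency---matches the paper's. The paper applies \eqref{eqn:id 2-step} to $E^n$ and then splits orthogonally, whereas you apply it directly to $\theta^n$; this difference is cosmetic.

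The substantive gap is your treatment of the stochastic terms. You merge the scheme's noise and the stochastic consistency defect into a single object $S^n = \tfrac32 I^n - \tfrac12 I^{n-1}$ with $I^m = \int_{t_{m-1}}^{t_m}\big(B(X^{m-1}_{k,h}) - B(X(s))\big)\diff W(s)$, and then bound $\E\|I^m\|_H^2$ by the triangle inequality as $2k\,\E\|\Delta B^{m-1}\|^2_{\LB_2(U_0,H)}$ plus twice a consistency piece. That factor $2$ destroys the cancellation you invoke: after the \eqref{eq:ap_2}-type decomposition of $(3I^n - I^{n-1},\theta^n)_H$ and Young's inequality with weight $\nu$, summation and It\=o isometry produce (up to boundary terms) $4k\nu\sum_m \E\|\Delta B^{m}\|^2_{\LB_2(U_0,H)}$ on the right, while the monotonicity dissipation in your $\Gamma_1$-analogue supplies only $-2k\nu\sum_n \E\|\Delta B^{n}\|^2_{\LB_2(U_0,H)}$. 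The surplus $2k\nu\sum_n \E\|\Delta B^{n}\|^2_{\LB_2(U_0,H)}$ is neither a projection nor a consistency error---it is the numerical error in $B$---and the argument does not close. One can rescue the merged route by expanding $\|I^m\|_H^2$ as a square and tuning the Young weights, but only by spending the slack in $\nu > 1$; the resulting constant is then not the $C_k$ stated.

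The paper avoids this by keeping the two contributions \emph{separate}: a term $\Gamma_2^n$ carries the scheme-level increments $\Delta B^{m-1}\Delta_k W^m$, whose It\=o isometry gives exactly $k\,\E\|\Delta B^{m-1}\|^2_{\LB_2(U_0,H)}$ with no triangle-inequality loss, while a term $\Gamma_4^n$ carries the pure consistency defect $\int_{t_{m-1}}^{t_m}\big(B(X(t_{m-1}))-B(X(s))\big)\diff W(s)$ handled by Lemma~\ref{lem:est_stoch_int}. Each receives its own \eqref{eq:ap_2}-style decomposition with its own Young weight ($\nu$ for $\Gamma_2^n$, $\tfrac{\nu}{\nu-1}$ for $\Gamma_4^n$), so that the $\Delta B$ sums cancel exactly against the dissipation and the two diamond fractions $\tfrac{1}{\nu}$ and $\tfrac{\nu-1}{\nu}$ add to one, precisely matching the diamond supplied by \eqref{eqn:id 2-step}.
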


\begin{proof}
  In the following, all equalities and inequalities involving random variables
  are assumed to hold $\P$-almost surely, unless stated otherwise.
  For $n=0,\ldots,N_k$, we denote the error of the discretization scheme
  \eqref{BDF2 scheme} at time $t_n$ by $E^n \coloneqq  X^n-X(t_n)$. Using
  the orthogonal projection $P_h \colon H \rightarrow V_h$, we split the
  error into two parts by writing 
  \begin{align*}
    E^n &= P_h E^n + (\id-P_h)E^n\\
    &= \left(X^n-P_h X(t_n)\right) + (P_h-\id)X(t_n)
    \eqqcolon \Theta^n + \Xi^n, \quad n=0,\ldots,N_k. 
  \end{align*}
  By definition, $\Theta^n$ and $\Xi^n$ are orthogonal with respect to the
  inner product $(\cdot,\cdot)_H$ and, hence, 
  \begin{equation}
  \label{eq:E_n H-norm}
    \|E^n\|_H^2 = \|\Theta^n\|_H^2 + \|\Xi^n\|_H^2.
  \end{equation}
  Let us fix $n \in \{2,\ldots,N_k\}$ for now. Recalling the identity 
  \eqref{eqn:id 2-step}, it holds that 
  \begin{equation}
  \label{eq:Gamma_n}
    \begin{aligned}
      & \| E^n \|^2_H - \|E^{n-1}\|^2_H +
      \|2 E^n- E^{n-1}\|^2_H - \|2 E^{n-1}- E^{n-2}\|^2_H\\ 
      &\quad + \| E^n- 2 E^{n-1} + E^{n-2}\|^2_H 
      = 4 \Big( \frac{3}{2} E^n- 2E^{n-1}
      + \frac{1}{2} E^{n-2}, E^n \Big)_H \eqqcolon \Gamma^n. 
    \end{aligned}
  \end{equation}
  We insert the $H$-orthogonal decomposition
  $E^n = \Theta^n + \Xi^n$ to obtain
  \begin{align}
    \label{eq:Gamma_n2}
    \begin{split}
      \Gamma^n &= 4 \Big( \frac{3}{2} E^n- 2E^{n-1} + \frac{1}{2}
      E^{n-2}, \Theta^n + \Xi^n \Big)_H \\
      &= 4 \Big( \frac{3}{2} X^n- 2X^{n-1} + \frac{1}{2} X^{n-2},
      \Theta^n \Big)_H\\
      &\quad - 4 \Big( \frac{3}{2} X(t_n) - 2 X(t_{n-1}) + \frac{1}{2}
      X(t_{n-2}), \Theta^n \Big)_H\\
      &\quad + 2 \big( 3 \Xi^n - 4 \Xi^{n-1} + \Xi^{n-2}, \Xi^n \big)_H.
    \end{split}
  \end{align}
  Using the definitions of the numerical scheme \eqref{BDF2 scheme} and of
  the exact solution \eqref{sde solution} to equation \eqref{sde_problem}, we
  deduce further  
  \begin{align*}
    \Gamma^n &= -4k \big\langle A(X^n), \Theta^n \big\rangle_{V^* \times V}\\
    &\qquad + 2 \big( 3  B(X^{n-1})\Delta_k W^n
    - B(X^{n-2})\Delta_k W^{n-1}, \Theta^n \big)_H\\ 
    &\qquad + 2 \Big\langle 3 \int_{t_{n-1}}^{t_n} A(X(s)) \diff s
    - \int_{t_{n-2}}^{t_{n-1}} A(X(s)) \diff s,
    \Theta^n \Big\rangle_{V^* \times V}\\ 
    &\qquad - 2 \Big( 3 \int_{t_{n-1}}^{t_n} B(X(s)) \diff W(s)
    - \int_{t_{n-2}}^{t_{n-1}} B(X(s)) \diff W(s),
    \Theta^n \Big)_H\\
    &\qquad + 2 \big( 3 \Xi^n - 4 \Xi^{n-1} + \Xi^{n-2}, \Xi^n \big)_H,
  \end{align*}
  where we do not distinguish notationally between the 
  solution $X$ and its modification appearing in \eqref{sde solution}, since
  we eventually take expectations of these terms. 

  After rearranging the terms, we arrive at $\Gamma^n = \Gamma_1^n
  + \Gamma_2^n + \Gamma_3^n + \Gamma_4^n + \Gamma_5^n$ with 
  \begin{align*}
    \Gamma_1^n &\coloneqq -4k \langle A(X^n)-A(X(t_n)),
    \Theta^n \rangle_{V^* \times V},\\ 
    \Gamma_2^n &\coloneqq 2 \big( 3\big[B(X^{n-1})-B(X(t_{n-1}))\big]
    \Delta_k W^n, \Theta^n \big)_H\\ 
    &\qquad \qquad -2 \big( \big[B(X^{n-2})-B(X(t_{n-2}))\big]
    \Delta_k W^{n-1}, \Theta^n \big)_H,\\ 
    \Gamma_3^n &\coloneqq 2 \Big\langle 3 \int_{t_{n-1}}^{t_n}
    A(X(s)) \diff s - \int_{t_{n-2}}^{t_{n-1}} A(X(s)) \diff s
    - 2k A(X(t_n)),\Theta^n \Big\rangle_{V^* \times V},\\ 
    \Gamma_4^n &\coloneqq 2 \Big( 3 \int_{t_{n-1}}^{t_n}
    B(X(t_{n-1}))-B(X(s)) \diff W(s), \Theta^n \Big)_H\\
    &\qquad \qquad - 2 \Big( \int_{t_{n-2}}^{t_{n-1}}
    B(X(t_{n-2}))-B(X(s)) \diff W(s), \Theta^n \Big)_H,\\
    \Gamma_5^n &\coloneqq 2 \big( 3 \Xi^n - 4 \Xi^{n-1} + \Xi^{n-2}, \Xi^n
    \big)_H. 
  \end{align*}
  We will further estimate each $\Gamma_i^n$ for $i \in \{1,\ldots,5\}$
  separately. 
  
  The assumption \eqref{con:str mono AB} is essential to estimate $\Gamma_1^n$
  appropriately. Together with the Lipschitz continuity of the operator $A$ and
  an application of Young's inequality, we conclude that
  \begin{align*}
    \Gamma_1^n &= -4k \langle A(X^n)-A(X(t_n)), E^n \rangle_{V^* \times V}
    +4k \langle  A(X^n)-A(X(t_n)), \Xi^n \rangle_{V^* \times V}\\ 
    &\leq -2kK \|E^n\|^2_V -2k\nu\|B(X^n)-B(X(t_n))\|^2_{\LB_2(U_0,H)}
    + 2k\kappa \|E^n\|_H^2\\
    &\quad + 4kL \|E^n\|_V \|\Xi^n\|_V\\
    &\leq -\frac{3}{2} kK \|E^n\|^2_V -2k\nu\|\Delta B^n\|^2_{\LB_2(U_0,H)}
    + 2k\kappa \|E^n\|_H^2
    + 8k\frac{L^2}{K} \|\Xi^n\|^2_V,
  \end{align*}
  where we also made use of the notation $\Delta B^n \coloneqq
  B(X^n)-B(X(t_n))$ for $n \in \{0,\ldots,N_k\}$.
  Regarding $\Gamma_2^n$, some elementary calculations yield that
  \begin{align*}
    \Gamma_2^n &= 2 \big( 3\Delta B^{n-1}\Delta_k W^n-\Delta B^{n-2}\Delta_k
    W^{n-1}, \Theta^n \big)_H\\ 
    &= 2 \big( \Delta B^{n-1}\Delta_k W^n-\Delta B^{n-2}\Delta_k W^{n-1},
    \Theta^n-2\Theta^{n-1}+\Theta^{n-2} \big)_H\\ 
    &\quad + 2 \big( \Delta B^{n-1}\Delta_k W^n, 2\Theta^n-\Theta^{n-1} \big)_H
    -2 \big( \Delta B^{n-2}\Delta_k W^{n-1}, 2\Theta^{n-1}-\Theta^{n-2} 
    \big)_H\\ 
    &\quad + 2 \big( \Delta B^{n-1}\Delta_k W^n, 3\Theta^{n-1}-\Theta^{n-2}
    \big)_H. 
  \end{align*}
  By using the martingale property of the stochastic integral, it follows that
  \begin{equation*}
    \E \big[ \Delta B^{n-1}\Delta_k W^n \mid \F_{t_{n-1}} \big]
    = \E \Big[ \int_{t_{n-1}}^{t_n} B(X^{n-1})-B(X(t_{n-1})) \diff W(s) 
    \mid \F_{t_{n-1}} \Big] = 0.
  \end{equation*}
  This fact and the $\F_{t_{n-1}}$-measurability of the random variable
  $(3\Theta^{n-1}-\Theta^{n-2})$ imply
  \begin{equation*}
    \E \big[ \big( \Delta B^{n-1}\Delta_k W^n, 3\Theta^{n-1}-\Theta^{n-2}
    \big)_H \big] = 0.
  \end{equation*}
  By applying Young's inequality to the decomposition of $\Gamma_2$ and
  taking expectation, we conclude that
  \begin{align*}
    \E\big[\Gamma_2^n\big]
    &\leq \nu \E \big[\|\Delta B^{n-1}\Delta_k W^n-\Delta B^{n-2}
    \Delta_k W^{n-1}\|_H^2\big]\\ 
    &\quad + \frac{1}{\nu} \E \big[\|\Theta^n-2\Theta^{n-1}
    +\Theta^{n-2}\|_H^2\big] + 2 \E \big[\big( \Delta B^{n-1}\Delta_k W^n,
    2\Theta^n-\Theta^{n-1} \big)_H\big]\\ 
    &\quad - 2 \E \big[\big( \Delta B^{n-2}\Delta_k W^{n-1},
    2\Theta^{n-1}-\Theta^{n-2} \big)_H\big].
  \end{align*}
  Next, observe that 
  \begin{align*}
    &\E \big[\|\Delta B^{n-1}\Delta_k W^n- \Delta B^{n-2}
    \Delta_k W^{n-1}\|_H^2\big]\\
    &\quad = \E \big[\|\Delta B^{n-1}\Delta_k W^n \|^2_H \big]
    + \E \big[ \| \Delta B^{n-2} \Delta_k W^{n-1}\|_H^2\big]
  \end{align*}
  which follows again from the martingale property of the stochastic integral.
  Hence, after summing over $n$ from $2$ to $j \in \{2,\ldots, N_k\}$ we arrive
  at
  \begin{align*}
    \sum_{n = 2}^j \E\big[\Gamma_2^n\big]
    &\leq \nu \sum_{n = 2}^j \big( \E \big[\|\Delta B^{n-1}\Delta_k W^n \|^2_H
    \big] + \E \big[ \| \Delta B^{n-2} \Delta_k W^{n-1}\|_H^2\big] \big)\\
    &\quad + 2 \E \big[\big( \Delta B^{j-1}\Delta_k W^j,
    2\Theta^j-\Theta^{j-1} \big)_H - \big( \Delta B^{0}\Delta_k W^{1},
    2\Theta^{1}-\Theta^{0} \big)_H\big] \\
    &\quad + \frac{1}{\nu} \sum_{n = 2}^j
    \E \big[\|\Theta^n-2\Theta^{n-1} +\Theta^{n-2}\|_H^2\big].
  \end{align*}
  From applications of the Cauchy--Schwarz inequality, Young's inequality and
  the It\=o isometry we obtain
  \begin{align*}
    &2\E \big[\big( \Delta B^{j-1}\Delta_k W^j,
    2\Theta^j-\Theta^{j-1} \big)_H \big]\\ 
    &\quad \le \frac{1}{\nu} \E\big[\|2\Theta^j-\Theta^{j-1}\|^2_H\big] 
    + \nu \E\big[ \| \Delta B^{j-1} \Delta_k W^{j-1}\|_H^2\big]\\ 
    &\quad = \frac{1}{\nu} \E\big[\|2\Theta^j-\Theta^{j-1}\|^2_H\big] 
    + k\nu \E\big[ \| \Delta B^{j-1} \|_{\mathcal{L}_2(U_0,H)}^2\big].
  \end{align*}
  The term $\E\big[\big( \Delta B^{0}\Delta_k W^{1},
  2\Theta^{1}-\Theta^{0}\big)_H\big]$ can be estimated in the same way.
  Inserting this into the estimate of $\Gamma_2^n$, applying again the It\=o
  isometry and recalling that $\nu \in (1,\infty)$ then finally yields the
  estimate  
  \begin{align*}
    \sum_{n = 2}^j \E\big[\Gamma_2^n\big]
    &\leq \frac{1}{\nu} \Big( \E\big[\|2\Theta^j-\Theta^{j-1}\|^2_H\big]
    + \sum_{n = 2}^j \E \big[\|\Theta^n-2\Theta^{n-1} +\Theta^{n-2}\|_H^2\big]
    \Big)\\
    &\quad + 2 k \nu \sum_{n = 1}^j \E \big[ \| \Delta B^{n-1}
    \|_{\mathcal{L}_2(U_0,H)}^2 \big]
    + \E\big[\|2\Theta^1-\Theta^0\|^2_H\big].
  \end{align*}

  Next, we turn to the estimation of $\Gamma_3^n$. We apply Young's inequality
  and the Lipschitz continuity of $A$ to deduce that
  \begin{align*}
    \Gamma_3^n &= 4 \int_{t_{n-1}}^{t_n} \langle A(X(s))-A(X(t_n)), \Theta^n
    \rangle_{V^* \times V} \diff s\\
    &\quad + 2 \int_{t_{n-1}}^{t_n} \langle A(X(s))-A(X(t_{n-1})), \Theta^n
    \rangle_{V^* \times V}  \diff s\\ 
    &\quad - 2 \int_{t_{n-2}}^{t_{n-1}} \langle A(X(s))-A(X(t_{n-1})),
    \Theta^n \rangle_{V^* \times V}  \diff s\\ 
    &\leq \frac{16}{K} \int_{t_{n-1}}^{t_n} \|A(X(s))-A(X(t_n))\|_{V^*}^2
    \diff s\\ 
    &\quad + \frac{8}{K} \int_{t_{n-2}}^{t_n}
    \|A(X(s))-A(X(t_{n-1}))\|_{V^*}^2 \diff s
    + \frac{1}{2} kK \|\Theta^n\|_V^2.
  \end{align*}
  After taking expectations and applying Fubini's theorem, we arrive at
  \begin{align*}
    \E\big[ \Gamma_3^n \big]
    &\leq \frac{16}{K} \int_{t_{n-1}}^{t_n}
    \E \Big[ \|A(X(s))-A(X(t_n))\|_V^2 \Big] \diff s\\
    &\quad + \frac{8}{K} \int_{t_{n-2}}^{t_n}
    \E \Big[ \|A(X(s))-A(X(t_{n-1}))\|_V^2 \Big] \diff s\\
    &\quad + kK \big( \E\big[ \|E^n\|_V^2 \big]
    + \E\big[ \|\Xi^n\|_V^2 \big] \big).
  \end{align*}  
  Therefore, the summation over $n$ from $2$ to
  $j \in \{2,\ldots,N_k\}$ together with an application of
  Lemma~\ref{lem:est_int_A} (and an obvious modification thereof) shows that
  \begin{equation*}
    \sum_{n=2}^{j} \E\big[ \Gamma_3^n \big]
    \le kK \sum_{n=2}^{j} \Big( \E\big[ \|E^n\|_V^2 \big]
    + \E\big[ \|\Xi^n\|_V^2 \big] \Big)
    + 32 \frac{L_A}{K} k^{\frac{2}{\p}}
    \|X\|^2_{\p-\mathrm{var},L^2(\Omega;V)}.
  \end{equation*}
  
  To decompose the term $\Gamma_4^n$, we define $\mathrm{I}^n \coloneqq
  \int_{t_{n-1}}^{t_n} B(X(t_{n-1}))-B(X(s)) \diff W(s)$ for $n \in
  \{1,\ldots,N_k\}$. An elementary calculation shows that
  \begin{align*}
     \Gamma_4^n &= 2 \big( 3 \mathrm{I}^n-\mathrm{I}^{n-1}, \Theta^n \big)_H\\
     &= 2 \big( \mathrm{I}^n-\mathrm{I}^{n-1},
     \Theta^n-2\Theta^{n-1}+\Theta^{n-2} \big)_H\\
     &\quad +2 \big( \mathrm{I}^n, 2\Theta^n-\Theta^{n-1} \big)_H
     -2 \big( \mathrm{I}^{n-1}, 2\Theta^{n-1}-\Theta^{n-2} \big)_H\\
     &\quad +2 \big( \mathrm{I}^n, 3\Theta^{n-1}-\Theta^{n-2} \big)_H.
  \end{align*}
  In the same way as in the estimation of $\Gamma_2^n$, we get $\E \big[\big(
  \mathrm{I}^n, 3\Theta^{n-1}-\Theta^{n-2} \big)_H\big] = 0$. After applying
  Young's inequality and taking expectation, we arrive at
  \begin{align*}
     \E\big[\Gamma_4^n\big] &\leq \frac{\nu}{\nu-1}
     \E\big[\|\mathrm{I}^n-\mathrm{I}^{n-1}\|_H^2\big] + \frac{\nu-1}{\nu}
     \E\big[\|\Theta^n-2\Theta^{n-1}+\Theta^{n-2}\|_H^2\big]\\
     &\quad +2 \E\big[\big( \mathrm{I}^n, 2\Theta^n-\Theta^{n-1} \big)_H\big]
     -2 \E\big[\big( \mathrm{I}^{n-1},
     2\Theta^{n-1}-\Theta^{n-2} \big)_H\big].
  \end{align*}
  Hence, after summing over $n$ from $2$ to $j \in \{2,\ldots,N_k\}$ we obtain
  \begin{align*}
    \sum_{n=2}^{j} \E\big[ \Gamma_4^n \big]
    &\le \frac{\nu}{\nu-1} \sum_{n = 2}^j 
    \E\big[ \|\mathrm{I}^n - \mathrm{I}^{n-1}\|_H^2\big]
    + \frac{\nu-1}{\nu} \sum_{n = 2}^j
    \E\big[\|\Theta^n-2\Theta^{n-1}+\Theta^{n-2}\|_H^2\big]\\
    &\quad +2 \E\big[\big( \mathrm{I}^j, 2\Theta^j-\Theta^{j-1} \big)_H\big]
    -2 \E\big[\big( \mathrm{I}^1, 2\Theta^1-\Theta^0 \big)_H\big]\\
    &\le \frac{\nu}{\nu-1} \sum_{n = 2}^j 
    \E\big[ \|\mathrm{I}^n - \mathrm{I}^{n-1}\|_H^2\big]
    + \frac{\nu-1}{\nu} \sum_{n = 2}^j
    \E\big[\|\Theta^n-2\Theta^{n-1}+\Theta^{n-2}\|_H^2\big]\\
    &\quad + \frac{\nu}{\nu-1} \E\big[\|\mathrm{I}^j\|_H^2\big]
    + \frac{\nu-1}{\nu} \E\big[\|2\Theta^j-\Theta^{j-1}\|_H^2\big]\\
    &\quad + \E\big[\|\mathrm{I}^1\|_H^2\big]
    + \E\big[\|2\Theta^1-\Theta^0\|_H^2\big]
  \end{align*}
  by a further application of Young's inequality. Since $I^n$ and $I^{n-1}$
  are uncorrelated and, hence, orthogonal with respect to 
  the inner product in $L^2(\Omega;H)$, it follows
  \begin{align*}
    \E \big[ \|\mathrm{I}^n - \mathrm{I}^{n-1}\|_H^2\big]
    = \E\big[\|\mathrm{I}^n \|^2_H \big]
    + \E \big[ \|\mathrm{I}^{n-1}\|_H^2\big] 
  \end{align*}
  for all $n \in \{2,\ldots,N_k\}$.
  Together with Lemma~\ref{lem:est_stoch_int} we therefore get
  \begin{align*}
    \begin{split}
      & \frac{\nu}{\nu-1} \E\big[\|\mathrm{I}^j\|_H^2\big]
      + \E\big[\|\mathrm{I}^1\|_H^2\big] + \frac{\nu}{\nu-1}
      \sum_{n=2}^{j} \E \big[\| \mathrm{I}^n -\mathrm{I}^{n-1} \|_H^2\big]\\
      &\quad \leq \frac{2\nu}{\nu-1}
      \sum_{n=1}^{j} \E\big[\|\mathrm{I}^n\|_H^2\big] 
      \leq \frac{2\nu}{\nu-1}
      L_B k^{\frac{2}{\p}} \|X\|^2_{\p-\mathrm{var},L^2(\Omega;V)}.
    \end{split}
  \end{align*}
  Altogether, this gives the estimate
  \begin{align*}
    \sum_{n=2}^{j} \E\big[ \Gamma_4^n \big]
    &\le \frac{\nu-1}{\nu} \Big( \E\big[\|2\Theta^j-\Theta^{j-1}\|_H^2\big]
    + \sum_{n=2}^{j} \E\big[\|\Theta^n-2\Theta^{n-1}+\Theta^{n-2}\|_H^2\big]
    \Big)\\
    &\quad + \frac{2\nu}{\nu-1}
    L_B k^{\frac{2}{\p}} \|X\|^2_{\p-\mathrm{var},L^2(\Omega;V)}
    + \E\big[\|2\Theta^1-\Theta^0\|_H^2\big]
  \end{align*}
  for every $j \in \{2,\ldots,N_k\}$.

  Finally, the term $\Gamma_5^n$ is rewritten in terms of the identity
  \eqref{eqn:id 2-step} by
  \begin{align*}
    \Gamma_5^n &=  \|\Xi^n\|^2_H - \|\Xi^{n-1}\|^2_H +
    \|2 \Xi^n- \Xi^{n-1}\|^2_H - \|2 \Xi^{n-1}- \Xi^{n-2}\|^2_H\\ 
    &\quad + \|\Xi^n- 2 \Xi^{n-1} + \Xi^{n-2}\|^2_H. 
  \end{align*}
  After taking expectation and summing over $n$ from $2$ to
  $j \in \{2,\ldots,N_k\}$ in equation \eqref{eq:Gamma_n}, we also see that
  \begin{align*}
    &\E\big[\| E^j \|^2_H\big] - \E\big[\| E^1\|^2_H\big]
    + \E\big[\|2 E^j- E^{j-1}\|^2_H\big]
    - \E\big[\|2E^1-E^0\|^2_H\big]\\
    &\quad \leq \sum_{n=2}^{j} \Big( \E\big[\Gamma_1^n + \Gamma_2^n
    + \Gamma_3^n + \Gamma_4^n + \Gamma_5^n \big]
    - \E \big[\| E^n-2E^{n-1}+ E^{n-2}\|^2_H\big] \Big). 
  \end{align*}
  Now, we insert the estimates for $\Gamma_i^n$, $i=1,\ldots,5$, and 
  we use that
  \begin{align*}
    &\E \big[\| E^n-2E^{n-1}+ E^{n-2}\|^2_H\big]\\
    &\quad = \E \big[\|\Theta^n-2\Theta^{n-1}+\Theta^{n-2}\|_H^2\big]
    + \E \big[ \|\Xi^n- 2 \Xi^{n-1} + \Xi^{n-2}\|^2_H\big],
  \end{align*}
  which follows from \eqref{eq:E_n H-norm}. This gives for every $j \in
  \{2,\ldots,N_k\}$ that
  \begin{align*}
    &\E\big[\| E^j\|^2_H\big] - \E\big[\| E^1 \|^2_H\big]
    + \E\big[\|2 E^j-E^{j-1}\|^2_H\big] - \E\big[\|2 E^1-E^0 \|^2_H\big]\\
    &\quad \leq -\frac{1}{2} k K \sum_{n=2}^{j} \E\big[\|E^n\|^2_V\big]
    + 2k \kappa \sum_{n=2}^{j} \E\big[\|E^n\|^2_H\big]
    + k \Big( 8\frac{L^2}{K}+K \Big) 
    \sum_{n=2}^{j} \E\big[\|\Xi^n\|^2_V\big]\\
    &\qquad + \E\big[\|2\Theta^j-\Theta^{j-1}\|^2_H\big]
    + 2 \E\big[\|2\Theta^1-\Theta^0\|^2_H\big] + 2k\nu \sum_{n=0}^{1}
    \E \big[\|\Delta B^n\|^2_{\LB_2(U_0,H)}\big] \\
    &\qquad + \E\big[ \|\Xi^j\|^2_H \big] - \E\big[ \|\Xi^{1}\|^2_H \big]
    + \E \big[ \|2\Xi^j-\Xi^{j-1}\|^2_H \big]
    - \E \big[\|2\Xi^{1}-\Xi^{0}\|^2_H \big]\\
    &\qquad + \Big(32 \frac{L_A}{K} + \frac{2\nu L_B}{\nu-1}\Big) 
    k^{\frac{2}{\p}} \|X\|^2_{\p-\mathrm{var},L^2(\Omega;V)}. 
  \end{align*}
  Thus, after recalling \eqref{eq:E_n H-norm} and some rearranging we arrive at
  \begin{align}
    \label{est_Theta2}
    \begin{split}
      &\E\big[\| E^j\|^2_H\big]
      + \frac{1}{2} k K \sum_{n=2}^{j} \E\big[\|E^n\|^2_V\big] \\
      &\quad \leq 2k \kappa \sum_{n=2}^{j} \E\big[\|E^n\|^2_H\big]
      + \E \big[ \|\Xi^j\|^2_H \big] 
      + k \Big( 8\frac{L^2}{K}+K \Big) 
      \sum_{n=2}^{j} \E\big[\|\Xi^n\|^2_V\big]\\
      &\qquad + 3 \E\big[\|2\Theta^1-\Theta^0\|^2_H\big]
      + \E\big[\|\Theta^1\|_H^2\big]+ 2k\nu \sum_{n=0}^{1}
      \E \big[\|\Delta B^n\|^2_{\LB_2(U_0,H)}\big] \\
      &\qquad + \Big(32 \frac{L_A}{K} + \frac{2\nu L_B}{\nu-1}\Big) 
      k^{\frac{2}{\p}} \|X\|^2_{\p-\mathrm{var},L^2(\Omega;V)}.
    \end{split}
  \end{align}
  Our goal is to provide estimates for the terms involving $\Xi^n$ on the
  right-hand side. The best approximation properties of the orthogonal
  projection $P_h$ with respect to the norm in $H$ implies
  \begin{equation*}
  	\|\Xi^n\|_H = \mathrm{dist}_H(X(t_n),V_h)
  	\qquad \text{ for each }n \in \{0,\ldots,N_k\}.
  \end{equation*}
  Further, the properties of the projections $P_h$ and $R_h$ yield the
  estimate
  \begin{align*}
    \|\Xi^n\|_V &= \|(P_h-\id)X(t_n)\|_V\\
    &\leq \|P_h(\id-R_h)X(t_n)\|_V + \|(P_h R_h-\id)X(t_n)\|_V\\ 
    &\leq \big(\|P_h\|_{\LB(V)}+1\big) \|(R_h-\id)X(t_n)\|_V
    = \big(1+\|P_h\|_{\LB(V)}\big) \mathrm{dist}_V(X(t_n),V_h) 
  \end{align*}
  for each $n \in \{0,\ldots,N_k\}$. Altogether, we conclude for
  $j \in \{2,\ldots,N_k\}$ that 
  \begin{align}
  \label{ineq4:Xi^n}
    \begin{split}
      &\E\big[\|\Xi^j\|_H^2\big] + k \Big( 8\frac{L^2}{K}+K \Big)
      \sum_{n=2}^j \E\big[\|\Xi^n\|_V^2\big]\\
      &\quad \leq \E\big[\mathrm{dist}_H(X(t_j),V_h)^2\big]\\
      &\qquad + k \Big( 8\frac{L^2}{K}+K \Big)
      \big(1+\|P_h\|_{\LB(V)}\big)^2
      \sum_{n=2}^j \E\big[\mathrm{dist}_V(X(t_n),V_h)^2\big]. 
    \end{split}
  \end{align}
  Moreover, we want to estimate further the remaining terms in
  \eqref{est_Theta2} that depend on the time steps $t_n$ with
  $n \in \{0,1\}$. It holds that
  $\|2\Theta^1-\Theta^0\|^2_H \leq 5 (\|\Theta^1\|_H^2+\|\Theta^0\|^2_H)$.
  Then, it follows from the $H$-orthogonality of the decomposition
  $E^n = \Theta^n + \Xi^n$ that
  \begin{equation*}
    3 \E\big[\|2\Theta^1-\Theta^0\|^2_H\big] + \E\big[\|\Theta^1\|_H^2\big]
    \leq 16 \sum_{n=0}^{1} \E\big[\|E^n\|_H^2\big].
  \end{equation*}
  This estimate together with Assumption~\ref{assump IC2} yields
  \begin{align*}
    \begin{split}
      &3 \E\big[\|2\Theta^1-\Theta^0\|^2_H\big] + \E\big[\|\Theta^1\|_H^2\big]
      + 2k\nu \sum_{n=0}^{1} \E \big[\|\Delta B^n\|^2_{\LB_2(U_0,H)}\big]\\
      &\quad \leq C_I \max\big\{16,2\nu\big\}
      (k + h^\gamma).
    \end{split}
  \end{align*}
  By inserting the last inequality and \eqref{ineq4:Xi^n} into estimate
  \eqref{est_Theta2}, we arrive at
  \begin{align*}
    &(1-2k\kappa) \E\big[\|E^j\|^2_H\big] + kK \sum_{n = 2}^j
    \E \big[\|E^n\|_V^2 \big]\\
    &\quad\le 2k\kappa \sum_{n=2}^{j-1} \E\big[\|E^n\|^2_H\big]
    + \tilde{C} \, \E\big[\mathrm{dist}_H(X(t_j),V_h)^2\big]\\
    &\qquad + \tilde{C} k \big(1+\|P_h\|_{\LB(V)}\big)^2
    \sum_{n=2}^j \E\big[\mathrm{dist}_V(X(t_n),V_h)^2\big]\\
    &\qquad 
    + \tilde{C} (k^{\frac{2}{\p}} \|X\|^2_{\p-\mathrm{var},L^2(\Omega;V)}
    + k + h^\gamma \big),
  \end{align*}
  where the constant $\tilde{C}>0$ is defined by
  \begin{equation*}
    \tilde{C} = \max\Big\{ 1, 8\frac{L^2}{K}+K, 16 C_I, 2\nu C_I,
    32\frac{L_A}{K} + \frac{2\nu L_B}{\nu-1} \Big\}.
  \end{equation*}
  Finally, applying a discrete version of Gronwall's inequality, see, e.g.,
  \cite{clark1987}, shows
  \begin{align*}
    &\E\big[\|E^j\|^2_H\big] + k \sum_{n = 2}^j \E \big[ \|E^n\|_V^2 \big]\\
    &\quad \le \ee^{2\kappa t_j \tilde{C}_k^{-1}} \frac{\tilde{C}}{\tilde{C}_k}
    \Big(k + h^\gamma+ k^{\frac{2}{\p}} \|X\|^2_{\p-\mathrm{var},L^2(\Omega;V)}
    + \E\big[\mathrm{dist}_H(X(t_j),V_h)^2\big] \\
    &\qquad + k \big(1+\|P_h\|_{\LB(V)}\big)^2 \sum_{n=2}^j
    \E\big[\mathrm{dist}_V(X(t_n),V_h)^2\big]
     \Big),
  \end{align*}
  where $\tilde{C}_k \coloneqq \min\{1-2k\kappa,K\}$. Taking the maximum with
  respect to $j \in \{2,\ldots,N_k\}$ on the right-hand side of this inequality
  yields an estimate for each summand on the left-hand side and completes the
  proof.
\end{proof}

\begin{remark}
  \label{rmk:proj norm}
  The error estimate in Theorem~\ref{thm:err} depends on the term
  $\|P_h\|_{\LB(V)}$ which is, in general, not uniformly bounded for arbitrarily
  small $h \in (0,1)$. However, for many important examples of evolution
  equations and Galerkin schemes a uniform bound can indeed be given. For
  instance, for the finite element method and the typical choice of the Gelfand
  triple with $V = H^1_0(\mathcal{D})$ and $H = L^2(\mathcal{D})$ on a bounded
  domain $\mathcal{D} \subset \R^d$, $d \in \{1,2,3\}$,
  the $H^1$-stability of the orthogonal $L^2$-projection $P_h$ has been
  investigated in \cite{bank2014,bramble2002,carstensen2002,crouzeix1987}.
\end{remark}

\begin{remark}
  \label{rmk:err est BEM}
  Under Assumption~\ref{assump AB2} and Assumption~\ref{assump X}, the BEM
  scheme \eqref{BEM scheme} with initial value $X^0_{k,h} \in L^2(\Omega;V)$
  admits for sufficiently small temporal step size with $k\kappa<1$ an
  approximate solution $(X^n_{k,h})_{n=0}^{N_k}$ such that at time $t_1 = k$ 
  the error estimate
  \begin{align*}  
    &\|X^1_{k,h}-X(t_1)\|_{L^2(\Omega;H)}^2
    + k \|X^1_{k,h}-X(t_1)\|_{L^2(\Omega;V)}^2\\
    &\quad \leq \frac{C}{C_k}
    \Big( k + \|(P_h-\id)X(t_1)\|_{L^2(\Omega;H)}^2
    + k \|(P_h-\id)X(t_1)\|_{L^2(\Omega;V)}^2\\
    &\qquad + \|X^0_{k,h}-X(t_0)\|_{L^2(\Omega;H)}^2
    + k \|B(X^0_{k,h})-B(X(t_0))\|_{L^2(\Omega;\LB_2(U_0,H))}^2 \Big)
  \end{align*}
  holds, where $C_k \coloneqq \min\{1-k\kappa,K\}$ and $C > 0$ is a constant
  only depending on $\kappa,\nu,L,K,T$ and $\beta_{V \hookrightarrow H}$. This
  estimate can be proven with similar techniques as used in the proof of
  Theorem~\ref{thm:err}.
\end{remark}

\section{Numerical experiments}
\label{sec:simu}

In this section, we perform two numerical experiments to give a more practical 
assessment of the BDF2-Maruyama scheme \eqref{BDF2 scheme}.
In Subsection~\ref{subsec:SHE} we use the scheme to simulate
the stochastic heat equation with additive noise and in
Subsection~\ref{subsec:SNP} we consider a stochastic partial differential
equation with a quasilinear drift as well as nonlinear multiplicative noise.
To better illustrate its performance, we compare the BDF2-Maruyama scheme to
the BEM scheme \eqref{BEM scheme}. 

In all numerical experiments, we use equidistant grids to discretize the
time-space domain $[0,T] \times [0,1]$. Regarding the temporal 
discretization, the BEM scheme \eqref{BEM scheme} as well as the BDF2-Maruyama
scheme \eqref{BDF2 scheme} are applied with the equidistant temporal step size
$k = \frac{T}{N_k}$, where $N_k = 2^l$ for $l=5,\ldots,10$. The spatial
discretization is realized by using the standard finite element method. To be
more precise, we consider the equidistant partition 
$\{x_i = ih \mid i = 0,\ldots,N_h+1\}$ with $N_h = 2^{12}$ interior nodes
and spatial step size $h = \frac{1}{N_h+1}$. We define the space $V_h$
consisting of piecewise linear finite elements by
\begin{equation*}
  V_h = \big\{v \in C([0,1]) \colon v|_{[x_{i-1},x_i]} \in \mathcal{P}_1
  \, \forall i=1,\ldots,N_h+1, \, v(0)=v(1)=0 \big\},
\end{equation*}
where $\mathcal{P}_1$ denotes the set of all polynomials up to degree $1$.
By $\{\phi_i\}_{i=1}^{N_h} \subset V_h$ we denote the Lagrange basis functions
of $V_h$ which are uniquely determined by $\phi_i(x_j)=\delta_{ij}$ for all 
$i,j = 1,\ldots, N_h$. Further, we recall from \cite[Section~4.4]{brenner2008}
or \cite[Section~5.1]{larsson2003} that the family of spaces
$\{V_h\}_{h \in (0,1)}$ defines a Galerkin scheme for the Sobolev space
$V = H^1_0(0,1)$. From \cite[Theorem~2]{crouzeix1987} it follows that
$\|P_h\|_{\LB(V)} < \infty$ holds uniformly in $h \in (0,1)$.

Moreover, we denote by 
\begin{equation}
  \label{eq5:intop}
  I_h \colon C([0,1]) \to V_h, 
  \quad v \mapsto I_h(v) = \sum_{i=1}^{N_h} v(x_i) \phi_i.
\end{equation}
the interpolation operator. An explicit calculation verifies the
interpolation error estimate 
\begin{equation}
  \label{ineq:I_h err L2}
  \|I_h(v) - v\|_{L^2(0,1)} \leq Ch \|v\|_{H_0^1(0,1)}
\end{equation}
for every function $v \in H_0^1(0,1) \hookrightarrow C([0,1])$ and
some constant $C > 0$, see, e.g., \cite[Theorem~4.4.20]{brenner2008}. 

Regarding the simulation of the $U$-valued $Q$-Wiener process $W$, we follow
\cite[Section~10.2]{lord2014} and consider the Karhu\-nen--Lo\`eve expansion
\eqref{eq2:KHexp}.
Notice that the decay of the eigenvalues $(q_j)_{j \in \N}$ determines the
smoothness of the Wiener process regarding the spatial variable. In the case of
$U = L^2(0,1)$, the choice of the sine basis $\chi_j(x) = \sqrt{2} \sin(j\pi
x)$ and the eigenvalues $q_j = j^{-(2r+1+\varepsilon)}$ with $\varepsilon > 0$
and $r \in \R_+$ leads to an almost surely $H^r_0(0,1)$-valued Wiener process,
see \cite[Example~10.9]{lord2014}. This enables us to sample efficiently the
Wiener process $W$, since the corresponding truncated representation
\begin{equation}
  \label{WP_simu}
  W^J(t,x) = \sum_{j=1}^J \sqrt{2} j^{-\frac{1}{2}(2r+1+\epsilon)} \beta_j(t)
  \sin(j\pi x), \quad J \in \N,
\end{equation}
can be implemented by using a discrete sine transform. The truncation
parameter is chosen to be $J = 2^{12}$ in all simulations.

In our numerical experiments, we compute the strong error between the
approximate solution of the respective 
scheme and the exact solution of the stochastic evolution equation
\eqref{sde_problem} with respect to the $L^\infty([0,T];L^2(\Omega;H))$-norm.
Hereby, we only take the maximum over the points of the temporal grid of the
considered numerical approximation. Using also a Monte Carlo simulation with
$M = 10^4$ independent samples, we approximate the strong error by 
\begin{align}
  \label{eq5:str_err}
  \begin{split}
    \mathrm{error}_{k,h} &= \max_{n\in\{2,\ldots,N_k\}} \Big( \frac{1}{M}
    \sum_{m=1}^M \|X^{n,(m)}_{k,h} - X^{(m)}(nk)\|_H^2 \Big)^{\frac{1}{2}}\\
    &\approx \max_{n\in\{2,\ldots,N_k\}} \|X^n_{k,h}-X(t_n)\|_{L^2(\Omega;H)},
  \end{split}
\end{align}
where $\{X^{n,(m)}_{k,h} - X^{(m)}(nk)\}_{m = 1,\ldots,M}$ are independently
generated samples of the error $X^n_{k,h} - X(t_n)$. Notice that the
computation of the strong error in \eqref{eq5:str_err} is not explicitly
depending on the initial values at the two grid points $\{t_0, t_1\}$. The
reason for this is that the same initial values are used for both considered
schemes, cf. Remark~\ref{rmk:ini val BDF2}. Therefore, in \eqref{eq5:str_err}
we only measure the error for all temporal grid points where the two schemes
differ.  

As a substitute for the exact solution in \eqref{eq5:str_err}, we use a
numerical reference solution which is computed by using the BDF2-Maruyama
scheme with $N_k = 2^{15}$ steps and the same number $N_h = 2^{12}$ of degrees
of freedom in all simulations. We mention that the numerical results reported
further below are not qualitatively impacted if the BEM scheme is used for the
computation of the reference solution. Moreover, to validate the statistical
significance of our numerical results, we determine the asymptotically valid
$(1-\alpha)$-confidence interval for $\|X^n_{k,h}-X(t_n)\|_{L^2(\Omega;H)}$
with $\alpha = 0.05$ for some value of the index $n \in \{2,\ldots,N_k\}$ at
which the error estimator $\text{error}_{k,h}$ in \eqref{eq5:str_err} attains
its maximum. In detail, we compute the confidence interval (CI) using the
formula
\begin{equation*}
  \Big[ \Big(\overline{Y}_M - z_{(1-\frac{\alpha}{2})} \frac{S_M}{\sqrt{M}}
    \Big)^\frac{1}{2},
  \Big( \overline{Y}_M + z_{(1-\frac{\alpha}{2})} \frac{S_M}{\sqrt{M}}
    \Big)^\frac{1}{2} \Big],
\end{equation*}
where $\overline{Y}_M$ as well as $S_M$ denote the sample mean and the unbiased
sample standard deviation with respect to $M$ realizations of independent
and identically distributed copies of the random variable
$Y = \|X^n_{k,h}-X(t_n)\|_H^2$ and $z_{(1-\frac{\alpha}{2})}$ denotes the
$(1-\frac{\alpha}{2})$-quantile of the standard normal distribution.

We also compute the \emph{experimental order of convergence} (EOC) as an
estimator of the temporal convergence rates. We define the EOC for 
successive temporal step sizes $k_{i-1}$, $k_i$ and fixed spatial step size $h$
by 
\begin{equation*}
  \mathrm{EOC} =
  \frac{\log(\mathrm{error}_{k_i,h})-\log(\mathrm{error}_{k_{i-1},h})} 
    {\log(k_i)-\log(k_{i-1})}.
\end{equation*}

\subsection{The Stochastic Heat Equation with additive noise}
\label{subsec:SHE}

We examine again the stochastic heat equation introduced in
Example~\ref{ex:SHE}. As above, we consider $U = L^2(0,1)$ and a $U$-valued
Wiener process which takes almost surely values in $H^r_0(0,1) \subset U$ for
some $r \in \R_+$.

Recall from Example~\ref{ex:SHE}, that the operators $A$ and $B$ satisfy
Assumption \ref{assump AB1}. Since the operator $A \colon V \to
V^\ast$ is linear and bounded, it is also Lipschitz continuous with $L=1$ and
the stronger monotonicity condition \eqref{con:str mono AB} holds with
$\kappa=0$, $\nu \in (1,\infty)$ and $K \in (0,2]$. Therefore,
Assumption~\ref{assump AB2} is satisfied. 

Moreover, it was shown in \cite[Theorem~2.31]{kruse2014} that the linear
problem \eqref{prob:SHE} admits for every $r \in (0,\infty)$ a unique mild
solution $X$ which is H\"older continuous with exponent
$\gamma = \min\{\frac12, \frac{r}{2}\}$
with respect to the $L^2(\Omega;H^1_0(0,1))$-norm. Since
the unique solution to \eqref{prob:SHE} as defined in Section~\ref{sec:discret}
coincides with the mild solution, cf.~\cite[Chapter~6]{daprato1992},
Assumption~\ref{assump X} is satisfied.

In contrast to Example~\ref{ex:SHE}, we do not project the initial value 
$X_0:=\sin(\pi \cdot) \in V = H^1_0(0,1)$ onto the subspace $V_h$ by applying
$P_h$. Instead we make use of the interpolation operator \eqref{eq5:intop},
which is easier to implement. Hence, we set $X^0_{k,h} := I_h(X_0) \in
L^2(\Omega;V_h)$ as the first initial value for both schemes.
From \eqref{ineq:I_h err L2} we obtain the estimate
\begin{align*}  
  \|X^0_{k,h} - X(t_0)\|_{L^2(\Omega;H)}^2
  &\le C h^2 \| X_0 \|_{L^2(\Omega;V)}^2.
\end{align*}
As discussed in Remark~\ref{rmk:ini val BDF2}, the second initial value
$X^1_{k,h}$ required for the BDF2-Maruyama scheme is computed by performing one
step with the BEM scheme. Due to Remark~\ref{rmk:err est BEM} and the previous
estimate, it holds
\begin{align*}  
  \|X^1_{k,h}-X(t_1)\|_{L^2(\Omega;H)}^2
  &\leq C \big( k + h^2 + \|(P_h-\id)X(t_1)\|_{L^2(\Omega;H)}^2\\
  &\qquad + k \, \|(P_h-\id)X(t_1)\|_{L^2(\Omega;V)}^2 \big).
\end{align*}
The best approximation property of $P_h$ in $V_h$ with respect to the norm in
$H$ and a further application of \eqref{ineq:I_h err L2} yield
\begin{equation*}
  \|(P_h-\id)X(t_1)\|_{L^2(\Omega;H)}^2
  \leq \|(I_h-\id)X(t_1)\|_{L^2(\Omega;H)}^2
  \leq C h^2 \|X(t_1)\|_{L^2(\Omega;V)}^2.
\end{equation*}
Moreover, we deduce
\begin{equation*}
  k \, \|(P_h-\id)X(t_1)\|_{L^2(\Omega;V)}^2\\
  \leq k \big(\|P_h\|_{\LB(V)}+1\big)^2 \|X(t_1)\|_{L^2(\Omega;V)}^2.
\end{equation*}
Since $X(t_1) \in L^2(\Omega;V)$ and the operator norm
$\|P_h\|_{\LB(V)}$ is uniformly bounded for $h \in (0,1)$,
Assumption~\ref{assump IC2} is fulfilled with $\gamma=2$. In particular,
both considered numerical schemes are well-defined.

Altogether, this shows that Theorem~\ref{thm:err} is applicable to problem
\eqref{prob:SHE}. Hence, assuming that the spatial step size $h$ is chosen
sufficiently small, we expect in our temporal error analysis that the strong
error of the BDF2-Maruyama scheme converges at least with rate $\frac{1}{2}$
as $k \to 0$. Furthermore, the BEM scheme is also expected to converge at
least with the same rate of $\frac12$, see \cite[Theorem~6.1]{gyoengy2009}.

Let the temporal step size $k$ and the spatial step size $h$ be given and fixed.
For our numerical experiments we have to project the Wiener process
onto the Galerkin space $V_h$, which requires the evaluation of the terms
$(\sigma \Delta_k W^{n,J},\phi_i)_H$ for each $i = 1,\ldots,N_h$.
We approximate these terms by a further application
of the interpolation operator $I_h$ 
\begin{align*}
  (\sigma \Delta_k W^{n,J},\phi_i)_H
  &\approx (I_h(\sigma \Delta_k W^{n,J}),\phi_i)_H\\
  &= \sigma \sum_{j=1}^{N_h} (\phi_i,\phi_j)_H \cdot \Delta_k W^{n,J}(x_j).
\end{align*}
Notice that $\Delta_k W^{n,J}$ takes values in $C^\infty([0,1])$
almost surely and the application of $I_h$ is well-defined.
In particular, its approximation error is sufficiently small with respect
to the $H$-norm.

Our goal is then to determine a discrete process
$(\boldsymbol{X}^n)_{n=1}^{N_k}$ consisting of random variables
$\boldsymbol{X}^n \colon \Omega \to \R^{N_h}$ such that
\begin{equation}
 \label{FEM representation}
  X^n_{k,h} = \sum_{i=1}^{N_h} \boldsymbol{X}^n_i \phi_i
\end{equation}
holds in $V_h$ for each $n=0,\ldots,N_k$.
For this, let $M_h = [(\phi_i,\phi_j)_H]_{i,j=1}^{N_h}$ and
$A_h = [(\phi_i,\phi_j)_V]_{i,j=1}^{N_h}$ denote the mass matrix and the
stiffness matrix arising from the finite element method. Then the reduced discrete
systems of \eqref{prob:SHE} for the BEM scheme and the BDF2-Maruyama scheme are
given, respectively, by
\begin{equation*}
  M_h (\boldsymbol{X}^n -\boldsymbol{X}^{n-1}) + k A_h \boldsymbol{X}^n
  = \sigma M_h \Delta_k \boldsymbol{W}^{n,J}
\end{equation*}
and
\begin{equation*}
  M_h (3\boldsymbol{X}^n - 4\boldsymbol{X}^{n-1} + \boldsymbol{X}^{n-2})
  + 2k A_h \boldsymbol{X}^n
  = \sigma M_h (3 \Delta_k \boldsymbol{W}^{n,J} 
  - \Delta_k \boldsymbol{W}^{n-1,J}),
\end{equation*}
where
$\Delta_k \boldsymbol{W}^{n,J} = [ W^J(t_n,x_i)-W^J(t_{n-1},x_i) ]_{i=1}^{N_h}$.
The discrete systems of both schemes are linear in $\boldsymbol{X}^n$ and can be
solved efficiently by using sparse matrix solvers. Due to the representation
formula \eqref{FEM representation}, the $H$-norm of the approximation $X^n_{k,h}$
can be computed by
\begin{equation*}
  \|X^n_{k,h}\|_H = \sqrt{(\boldsymbol{X}^n)^T M_h \boldsymbol{X}^n}.
\end{equation*}

We consider $T=1$ in all numerical experiments. In the first experiment, we simulate 
the deterministic heat equation \eqref{prob:SHE} with $\sigma=0$. The corresponding
results in Table~\ref{table:SHE no noise} show that the numerical error of the
BDF2-Maruyama scheme is significantly smaller compared to the error of the BEM
scheme for each level of the temporal discretization. Further, the margin
between these errors increases for larger temporal step sizes $k$ and the
BDF2-Maruyama scheme converges twice as fast as indicated by the experimental
order of convergence. Theses observations are in line with the well studied
deterministic case, see, e.g., \cite[Theorem~10.2]{thomee2006}.

\begin{table}[ht]
  \caption{Deterministic heat equation with $\sigma=0$.}
  \label{table:SHE no noise}
  \begin{tabular}{p{1cm}p{1.5cm}p{1.2cm}p{1.5cm}p{1.2cm}}
          & BEM      &       & BDF2     & \\
    \noalign{\smallskip}\hline\noalign{\smallskip}
    $N_k$ & error    & EOC   & error    & EOC \\ 
    \noalign{\smallskip}\hline\noalign{\smallskip}
    32    & 0.035361 &       & 0.020588 & \\ 
    64    & 0.018857 & 0.91  & 0.007521 & 1.45 \\ 
    128   & 0.009719 & 0.96  & 0.002289 & 1.72 \\ 
    256   & 0.004935 & 0.98  & 0.000654 & 1.81 \\ 
    512   & 0.002487 & 0.99  & 0.000176 & 1.89 \\ 
    1024  & 0.001249 & 0.99  & 0.000046 & 1.93 \\ 
    \noalign{\smallskip}\hline\noalign{\smallskip}
  \end{tabular}
\end{table}

In the following, we compare the two schemes
for fixed noise intensity $\sigma = 1$ and varying spatial regularity of the
Wiener process $W$ determined by the parameter $r \in \{0.1,1,5\}$. The
numerical results of the BEM scheme and the BDF2-Maruyama scheme are presented
for each parameter value $r$ in Table~\ref{table:SHE r=0.1} to
Table~\ref{table:SHE r=5.0}, respectively. 

In Table~\ref{table:SHE r=0.1} we see that the errors of the BDF2-Maruyama scheme
are only slightly smaller compared to those of the BEM scheme in the case of
the least regular noise with $r=0.1$. The values for the experimental order of
convergence essentially agree for both schemes. This is in line with the
expectation that a higher order temporal scheme does not provide an
advantage if the exact solution is not sufficiently regular.

In Table~\ref{table:SHE r=1.0} and Table~\ref{table:SHE r=5.0}, we notice that,
in the case of more regular noise, the BDF2-Maruyama scheme yields significantly
more accurate approximations in comparison to the BEM scheme.
The observed EOC values of both schemes exceed the expected rate of
$\frac{1}{2}$. However, this does not come as a surprise since we
discretize an evolution equation with additive noise and both schemes coincide
with their respective Milstein variants. In addition, observe that 
the BDF2-Maruyama scheme converges with a slightly higher
rate when using coarse temporal grids with $N_k \in \{32,64,128\}$. Further,
the accuracy of the BDF2-Maruyama scheme increases more clearly if 
the noise is more regular.

In conclusion, our numerical experiments indicate that the BDF2 scheme is
superior to the BEM scheme, in particular, if the noise and, hence, the exact
solution admit a certain regularity. Only in the case of less regular
noise, both schemes perform equally well. 

\begin{table}[ht]
  \caption{Stochastic heat equation with $\sigma=1$ and $r=0.1$.}
  \label{table:SHE r=0.1}
  \begin{tabular}{p{1cm}p{1.5cm}p{1.55cm}p{1.2cm}p{1.5cm}p{1.55cm}p{1.2cm}}
          & BEM & & 
          & BDF2 & & \\ 
    \noalign{\smallskip}\hline\noalign{\smallskip}
    $N_k$ & error & CI $\pm$ & EOC
          & error & CI $\pm$ & EOC \\ 
    \noalign{\smallskip}\hline\noalign{\smallskip}
    32    & 0.067292 & 0.000396 &       & 0.055539 & 0.000299 & \\ 
	64    & 0.043789 & 0.000209 & 0.62  & 0.036166 & 0.000163 & 0.62 \\ 
	128   & 0.029026 & 0.000114 & 0.59  & 0.024732 & 0.000094 & 0.55 \\ 
	256   & 0.019404 & 0.000064 & 0.58  & 0.016847 & 0.000055 & 0.55 \\ 
	512   & 0.013059 & 0.000037 & 0.57  & 0.011466 & 0.000031 & 0.56 \\ 
	1024  & 0.008803 & 0.000021 & 0.57  & 0.007773 & 0.000018 & 0.56 \\  
    \noalign{\smallskip}\hline\noalign{\smallskip}
  \end{tabular}
\end{table}

\begin{table}[ht]
  \caption{Stochastic heat equation with $\sigma=1$ and $r=1.0$.}
  \label{table:SHE r=1.0}
  \begin{tabular}{p{1cm}p{1.5cm}p{1.55cm}p{1.2cm}p{1.5cm}p{1.55cm}p{1.2cm}}
          & BEM & & 
          & BDF2 & & \\ 
    \noalign{\smallskip}\hline\noalign{\smallskip}
    $N_k$ & error & CI $\pm$ & EOC
          & error & CI $\pm$ & EOC \\ 
    \noalign{\smallskip}\hline\noalign{\smallskip}
    32    & 0.048895 & 0.000448 &       & 0.034177 & 0.000305 & \\ 
	64    & 0.026680 & 0.000226 & 0.87  & 0.016160 & 0.000128 & 1.08 \\ 
	128   & 0.014333 & 0.000111 & 0.90  & 0.008146 & 0.000055 & 0.99 \\ 
	256   & 0.007569 & 0.000055 & 0.92  & 0.004345 & 0.000026 & 0.91 \\ 
	512   & 0.003984 & 0.000027 & 0.93  & 0.002293 & 0.000012 & 0.92 \\ 
	1024  & 0.002077 & 0.000013 & 0.94  & 0.001203 & 0.000006 & 0.93 \\ 
    \noalign{\smallskip}\hline\noalign{\smallskip}
  \end{tabular}
\end{table}

\begin{table}[ht]
  \caption{Stochastic heat equation with $\sigma=1$ and $r=5.0$.}
  \label{table:SHE r=5.0}
  \begin{tabular}{p{1cm}p{1.5cm}p{1.55cm}p{1.2cm}p{1.5cm}p{1.55cm}p{1.2cm}}
	      & BEM & & 
	      & BDF2 & & \\ 
	\noalign{\smallskip}\hline\noalign{\smallskip}
	$N_k$ & error & CI $\pm$ & EOC
	      & error & CI $\pm$ & EOC \\ 
	\noalign{\smallskip}\hline\noalign{\smallskip}
	32    & 0.044139 & 0.000471 &       & 0.029223 & 0.000356 & \\ 
	64    & 0.023424 & 0.000249 & 0.91  & 0.012110 & 0.000152 & 1.27 \\ 
	128   & 0.012039 & 0.000125 & 0.96  & 0.005206 & 0.000072 & 1.22 \\ 
	256   & 0.006154 & 0.000064 & 0.97  & 0.002579 & 0.000036 & 1.01 \\ 
	512   & 0.003093 & 0.000032 & 0.99  & 0.001282 & 0.000017 & 1.01 \\ 
	1024  & 0.001563 & 0.000016 & 0.98  & 0.000640 & 0.000009 & 1.00 \\ 
	\noalign{\smallskip}\hline\noalign{\smallskip}
  \end{tabular}
\end{table}

\subsection{A Nonlinear SPDE with multiplicative noise}
\label{subsec:SNP}

In this subsection, we consider the quasilinear stochastic partial differential
equation
\begin{equation}
  \label{prob:SNP}
  \begin{aligned}
    \diff u(t,x) - &\big( \psi(|u_x(t,x)|) \cdot u_x(t,x) \big)_x \diff t\\
    &\qquad = \sigma \sqrt{8|u(t,x)|^2+1} \diff W(t,x),
    && (t,x) \in (0,T] \times (0,1),\\
    u(t,0) &= u(t,1) = 0, &&t \in (0,T],\\
    u(0,x) &= \sin(\pi x), &&x \in (0,1),
  \end{aligned}
\end{equation}
with Dirichlet boundary conditions as well as a smooth deterministic initial
value. This problem is based on a generalized example of a deterministic 
nonlinear variational problem with strongly monotone drift from
\cite[Subsection~3.5]{emmrich2004}. 

In \eqref{prob:SNP} the function $\psi\colon \R_0^+ \to \R$ is assumed to be
continuous and bounded such that the mapping
$t \mapsto t \psi(t)$ is Lipschitz continuous and strongly
monotone, i.e., there exist $m_1,m_2 > 0$ with 
\begin{equation*}
  \psi(t)t-\psi(s)s \geq m_1 (t-s) \quad \text{if } t \geq s
  \quad \text{and} \quad
  |\psi(t)t-\psi(s)s| \leq m_2 |t-s|
\end{equation*}
for all $s,t \in \R^+_0$.
Further, we assume that $m_1 \leq \psi(t) \leq m_2$ for any $t \in \R^+_0$.

Under these conditions it has been shown in \cite[Corollary~3.5.3]{emmrich2004}
that $\psi$ induces an abstract operator $A \colon V \to V^\ast$ defined by
\begin{align*}
  &A \colon V \to V^*, \quad
  v \mapsto -\big( \psi(v_x) \cdot v_x \big)_x,
\end{align*}
where we again consider the Gelfand triple with $V =
H^1_0(0,1)$ and $H = L^2(0,1)$. Moreover, the operator
$A$ is globally Lipschitz continuous and strongly monotone.

In this numerical experiment, we choose the function 
$\psi \colon \R_0^+ \to \R$ to be $\psi(t) = \mathrm{erf}(t-2)+2$, where
$\mathrm{erf}$ denotes the error function 
\begin{equation*}
  \mathrm{erf}(t) = \frac{2}{\sqrt{\pi}} \int_0^t \ee^{-s^2} \diff s,
  \quad t \in \R.
\end{equation*}

As before, the parameter $\sigma \in \R$ determines the intensity of the
nonlinear multiplicative noise in \eqref{prob:SNP}. 
The Wiener process $W$ is assumed to take values in $H^1_0(0,1)$ almost surely
and its approximation $W^J$ is defined in the same way as in \eqref{WP_simu}.
The corresponding abstract operator for the multiplicative noise is given by
\begin{align*}
  B \colon V \to \LB_2(U_0,H), \quad
    v \mapsto \sigma \sqrt{8|v|^2+1} \cdot \id_H.
\end{align*}
It holds
\begin{equation}
  \label{esti_B_bdd}
  \begin{aligned}
    \|B(v)\|_{\LB_2(U_0,H)}^2
    &= \sigma^2 \cdot \sum_{j \in \N}
    \|\sqrt{8|v|^2+1}Q^\frac{1}{2}\chi_j\|_H^2\\
    &\leq \sigma^2 \big( \sum_{j \in \N} q_j \|\chi_j\|_{C([0,1])}^2 \big)
    \|\sqrt{8|v|^2+1}\|_{L^2(0,1)}^2\\
    &= 2 \sigma^2 \Tr(Q) (8\|v\|_H^2 + 1)
  \end{aligned}
\end{equation}
for every $v \in V$. This ensures that the operator $B$ is well-defined and
bounded. In addition, notice that the mapping
$g \colon \R \to \R, \, x \mapsto \sqrt{8x^2+1}$ is Lipschitz continuous with
Lipschitz constant $L=\sqrt{8}$. Therefore, we obtain
\begin{equation}
  \label{esti_B_Lip}
  \begin{aligned}
  \|B(v)-B(u)\|_{\LB_2(U_0,H)}^2
  &= \sigma^2 \cdot \sum_{j \in \N}
  \|(\sqrt{8|v|^2+1}-\sqrt{8|u|^2+1})Q^\frac{1}{2}\chi_j\|_H^2\\
  &\leq \sigma^2 \big( \sum_{j \in \N} q_j \|\chi_j\|_{C([0,1])}^2 \big)
  \|g(v)-g(u)\|_{L^2(0,1)}^2\\
  &\leq 16 \sigma^2 \Tr(Q) \|v-u\|_H^2
  \end{aligned}
\end{equation}
for all $v,u \in V$.

Since the operators $A \colon V \to V^\ast$ and $B \colon V \to \LB_2(U_0,H)$
are Lipschitz continuous, both are $\B(V)$-measurable. The Lipschitz continuity
of the operator $A$ also implies that $A$ is hemi-continuous and 
grows linearly with $p=2$. Further, it holds
$\langle A(v),v \rangle_{V^\ast \times V} \geq m_1 \|v\|_V^2$ for all
$v\ \in V$. Together with the estimate \eqref{esti_B_bdd} this yields that the
coercivity condition \eqref{con:coer AB} holds with $\kappa =
16\nu\sigma^2\Tr(Q)$, $c = 2\nu\sigma^2\Tr(Q)$ and $\mu = m_1$ for any
$\nu \in (1,\infty)$. The strong monotonicity of $A$ and the estimate
\eqref{esti_B_Lip} imply that the stronger monotonicity condition
\eqref{con:str mono AB} is satisfied with $K = 2m_1 > 0$. Hence
Assumption~\ref{assump AB2} is fulfilled.

The smooth initial value $X_0 = \sin(\pi \cdot)$ satisfies $X_0 \in V$. However,
in case of this nonlinear problem, sufficient regularity properties of the exact
solution could neither be proven nor found in the literature.

The initial values for the schemes \eqref{BEM scheme} and \eqref{BDF2 scheme} are
computed in the same way as in Subsection \ref{subsec:SHE}. Notice that the operator
$B \colon H \to \LB_2(U_0,H)$ is Lipschitz continuous due to estimate
\eqref{esti_B_Lip} and hence the consistency of the initial values with respect to
the $H$-norm is sufficient for Assumption~\ref{assump IC2} to be satisfied.

As before, the projection of the noise term on the Galerkin space $V_h$ is realized
by applying the interpolation operator $I_h$ such that for each $i = 1,\ldots,N_h$
\begin{align*}
  \big( B(X^{n-1}_{k,h}) \Delta_k W^{n,J},\phi_i \big)_H
  &\approx \big( I_h(B(X^{n-1}_{k,h}) \Delta_k W^{n,J}), \phi_i \big)_H\\
  &= \sigma \sum_{j=1}^{N_h} (\phi_i,\phi_j)_H \cdot
    \big[ B(X^{n-1}_{k,h}(x_j)) \Delta_k W^{n,J}(x_j) \big].
\end{align*}
Since the mapping $g$ is continuously differentiable and $X^{n-1}_{k,h}$ is
$V$-valued, the composition $g \circ X^{n-1}_{k,h}$ is also $V$-valued,
compare, e.g., with \cite[Corollary 8.11]{brezis2011}. Moreover, the discrete
Wiener increment $\Delta_k W^{n,J}$ is smooth and hence the term
$B(X^{n-1}_{k,h}) \Delta_k W^{n,J}$ is $V$-valued as well as continuously embedded
into $C([0,1])$. This ensures that this approximation is well-defined and, by
estimate \eqref{ineq:I_h err L2}, the corresponding interpolation error is 
of order $\mathcal{O}(h)$.

By identifying $X^n_{k,h}$ with the random $\R^{N_h}$-valued 
vector $\boldsymbol{X}^n$ through the 
formula \eqref{FEM representation} for each $n=0,\ldots,N_k$, we define
the stiffness matrix by
\begin{equation*}
  A_h(\boldsymbol{X}^n)
  = \Big[ \big( \psi\big(|(X^n_{k,h})'|\big) \phi_i',\phi_j' \big)_{L^2(0,1)}
    \Big]_{i,j=1}^{N_h}
  \quad \text{with} \quad
  (X^n_{k,h})' = \sum_{l=1}^{N_h} \boldsymbol{X}^n_l \phi_l',
\end{equation*}
and introduce the notation
\begin{equation*}
  B(\boldsymbol{X}^{n-1}) \Delta_k \boldsymbol{W}^{n,J}
  = \Big[ (8|\boldsymbol{X}^{n-1}_i|^2+1)^{\frac{1}{2}} 
    \big(W^J(t_n,x_i)-W^J(t_{n-1},x_i) \big) \Big]_{i=1}^{N_h}.
\end{equation*}
Since $X^n_{k,h} \in V_h$ is piecewise linear, the corresponding derivative
$(X^n_{k,h})'$ is piecewise constant and can be directly implemented without
applying any quadrature. The reduced discrete systems of \eqref{prob:SNP} for the
BEM scheme and the BDF2-Maruyama scheme are given, respectively, by
\begin{equation*}
  M_h (\boldsymbol{X}^n -\boldsymbol{X}^{n-1}) + k A_h(\boldsymbol{X}^n)
    \boldsymbol{X}^n
  = \sigma M_h \big( B(\boldsymbol{X}^{n-1}) \Delta_k \boldsymbol{W}^{n,J} \big)
\end{equation*}
and
\begin{align*}
  &M_h (3\boldsymbol{X}^n - 4\boldsymbol{X}^{n-1} + \boldsymbol{X}^{n-2})
    + 2k A_h(\boldsymbol{X}^n) \boldsymbol{X}^n\\
  &\qquad = \sigma M_h
    \big( 3 B(\boldsymbol{X}^{n-1}) \Delta_k \boldsymbol{W}^{n,J}
    - B(\boldsymbol{X}^{n-2}) \Delta_k \boldsymbol{W}^{n-1,J} \big).
\end{align*}

Since the discrete systems of both schemes are nonlinear, we solve for
$\boldsymbol{X}^n$ by applying Newton's method with $N$ iterations in each
temporal step. In more detail, we set
$\hat{\boldsymbol{X}}^0 \coloneqq \boldsymbol{X}^{n-1}$ and
compute iteratively $\hat{\boldsymbol{X}}^l$ for each $l \in \{1,\ldots,N\}$ by
\begin{equation*}
  J_{k,h}(\hat{\boldsymbol{X}}^{l-1})
    (\hat{\boldsymbol{X}}^l - \hat{\boldsymbol{X}}^{l-1})
  =  -F^n_{k,h}(\hat{\boldsymbol{X}}^{l-1}),
\end{equation*}
where $F^n_{k,h}$ and its Jacobian $J_{k,h}$ are given for the BEM scheme and
the BDF2-Maruyama scheme, respectively, by
\begin{align*}
  F^n_{k,h}(\boldsymbol{X})
  &= \big( M_h + k A_h(\boldsymbol{X}) \big) \boldsymbol{X}
    - M_h (\boldsymbol{X}^{n-1}
    + \sigma B(\boldsymbol{X}^{n-1}) \Delta_k \boldsymbol{W}^{n,J})\\
  J_{k,h}(\boldsymbol{X}) &= M_h + k A^\ast_h(\boldsymbol{X})
\end{align*}
and
\begin{align*}
  F^n_{k,h}(\boldsymbol{X})
  &= \big( 3M_h + 2k A_h(\boldsymbol{X}) \big) \boldsymbol{X}
    - M_h \big( 4\boldsymbol{X}^{n-1} - \boldsymbol{X}^{n-2} \big)\\
  &\qquad - \sigma M_h
    \big( 3 B(\boldsymbol{X}^{n-1}) \Delta_k \boldsymbol{W}^n
    - B(\boldsymbol{X}^{n-2}) \Delta_k \boldsymbol{W}^{n-1,J} \big)\\
  J_{k,h}(\boldsymbol{X}) &= 3M_h + 2k A^\ast_h(\boldsymbol{X}).
\end{align*}
Hereby, $A^\ast_h$ denotes the Jacobian of the mapping $\boldsymbol{X} \mapsto
A_h(\boldsymbol{X}) \boldsymbol{X}$. A short computation yields
\begin{equation*}
  A_h^\ast(\boldsymbol{X})
  = \bigg[ \Big( \big[ \psi(|X'|) + \psi'(|X'|) |X'| \big] \phi_i',\phi_j'
    \Big)_{L^2(0,1)} \bigg]_{i,j=1}^{N_h}
  \quad \text{with} \quad X' = \sum_{l=1}^{N_h} \boldsymbol{X}_l \phi_l'.
\end{equation*}
The number of iterations $N$ is at least $N_{min} = 3$ and is increased up to
$N_{max} = 10$ as long as the current residual exceeds the tolerance limit
$tol = 10^{-12}$.

As before, we consider $T=1$ throughout all numerical experiments. First, we
simulate \eqref{prob:SNP} without noise by setting $\sigma=0$. The corresponding
results in Table~\ref{table:SNP no noise} show that the BDF2-Maruyama scheme
yields a more accurate approximation and converges faster for smaller temporal
step sizes $k$ in comparison to the BEM scheme.

\begin{table}[ht]
  \caption{Nonlinear deterministic PDE \eqref{prob:SNP} with $\sigma=0$.}
  \label{table:SNP no noise}
  \begin{tabular}{p{1cm}p{1.5cm}p{1.2cm}p{1.5cm}p{1.2cm}}
		  & BEM      &		 & BDF2     & \\ 
	\noalign{\smallskip}\hline\noalign{\smallskip}
	$N_k$ & error    & EOC   & error    & EOC \\
	\noalign{\smallskip}\hline\noalign{\smallskip}
	32    & 0.066045 &       & 0.040309 & \\ 
	64    & 0.040482 & 0.71  & 0.025797 & 0.64 \\ 
	128   & 0.022121 & 0.87  & 0.012795 & 1.01 \\ 
	256   & 0.011636 & 0.93  & 0.005395 & 1.25 \\ 
	512   & 0.005986 & 0.96  & 0.002478 & 1.12 \\ 
	1024  & 0.003038 & 0.98  & 0.000994 & 1.32 \\ 
	\noalign{\smallskip}\hline\noalign{\smallskip}
  \end{tabular}
\end{table}

In addition, we simulate \eqref{prob:SNP} with different noise intensities by
choosing $\sigma \in \{0.25,0.75\}$. In Table~\ref{table:SNP sigma=0.25} we
observe that for smaller noise intensity with $\sigma = 0.25$ the approximation
results behave very similar to the deterministic case. In particular, the
BDF2-Maruyama scheme provides more favourable results. On the contrary, we
notice in Table~\ref{table:SNP sigma=0.75} that the advantage of the
BDF2-Maruyama scheme over the BEM scheme for larger noise intensity with
$\sigma = 0.75$ is barely noticeable and diminishes as the temporal step size
decreases.

In conclusion, the numerical experiments indicate that our theoretical results
are indeed applicable to this nonlinear stochastic partial differential equation.
In case of small noise intensity, the BDF2-Maruyama scheme performs significantly
better than the BEM scheme for similar temporal refinement levels. The margin is
less significant for large noise intensity though.

\begin{table}[ht]
  \caption{Nonlinear stochastic PDE \eqref{prob:SNP} with $\sigma=0.25$.}
  \label{table:SNP sigma=0.25}
  \begin{tabular}{p{1cm}p{1.5cm}p{1.55cm}p{1.2cm}p{1.5cm}p{1.55cm}p{1.2cm}}
          & BEM & & 
	      & BDF2 & & \\ 
	\noalign{\smallskip}\hline\noalign{\smallskip}
	$N_k$ & error & CI $\pm$ & EOC
	      & error & CI $\pm$ & EOC \\ 
	\noalign{\smallskip}\hline\noalign{\smallskip}
    32    & 0.067400 & 0.000226 &       & 0.043252 & 0.000250 & \\ 
	64    & 0.041681 & 0.000138 & 0.69  & 0.027827 & 0.000150 & 0.64 \\ 
	128   & 0.022931 & 0.000078 & 0.86  & 0.014031 & 0.000073 & 0.99 \\ 
	256   & 0.012223 & 0.000046 & 0.91  & 0.006340 & 0.000027 & 1.15 \\ 
	512   & 0.006412 & 0.000028 & 0.93  & 0.003093 & 0.000013 & 1.04 \\ 
	1024  & 0.003343 & 0.000018 & 0.94  & 0.001417 & 0.000009 & 1.13 \\   
	\noalign{\smallskip}\hline\noalign{\smallskip}
  \end{tabular}
\end{table}

\begin{table}[ht]
  \caption{Nonlinear stochastic PDE \eqref{prob:SNP} with $\sigma=0.75$.}
  \label{table:SNP sigma=0.75}
  \begin{tabular}{p{1cm}p{1.5cm}p{1.55cm}p{1.2cm}p{1.5cm}p{1.55cm}p{1.2cm}}
		  & BEM & & 
		  & BDF2 & & \\ 
    \noalign{\smallskip}\hline\noalign{\smallskip}
    $N_k$ & error & CI $\pm$ & EOC
	      & error & CI $\pm$ & EOC \\ 
    \noalign{\smallskip}\hline\noalign{\smallskip}
    32    & 0.092583 & 0.000849 &       & 0.081076 & 0.001099 & \\ 
	64    & 0.061146 & 0.000686 & 0.60  & 0.055266 & 0.000790 & 0.55 \\ 
	128   & 0.038248 & 0.000505 & 0.68  & 0.035124 & 0.000648 & 0.65 \\ 
	256   & 0.024065 & 0.000338 & 0.67  & 0.022407 & 0.000389 & 0.65 \\ 
	512   & 0.015567 & 0.000239 & 0.63  & 0.014762 & 0.000259 & 0.60 \\ 
	1024  & 0.010278 & 0.000159 & 0.60  & 0.009875 & 0.000163 & 0.58 \\    
	\noalign{\smallskip}\hline\noalign{\smallskip}
  \end{tabular}
\end{table}

\section*{Acknowledgment}

The authors like to thank Etienne Emmrich for very helpful comments on
the BDF2 method for nonlinear evolution equations.

The first part of this research was carried out in the framework
of \textsc{Matheon} supported by Einstein Foundation Berlin. It was also
financially supported by TU Berlin (ASF Nr.~3306). 
RK also gratefully acknowledges financial support by the German Research
Foundation (DFG) through the research unit FOR 2402 -- Rough paths,
stochastic partial differential equations and related topics -- at TU Berlin.

\end{document}